\newcommand{\norm}[1]{\lVert#1 \rVert}
\newcommand{\abs}[1]{\lvert#1 \rvert}
\DeclareMathOperator{\spr}{spr}
\newtheorem{theorem}{Theorem}
\newtheorem{lemma}[theorem]{Lemma}
\newtheorem{definition}[theorem]{Definition}
\newtheorem{example}[theorem]{Example}
\newtheorem{remark}[theorem]{Remark}
\begin{document}
\title{Matrix Ordered Operator Algebras.}
\author{Ekaterina Juschenko, Stanislav Popovych
\\\vspace{0.1cm}\\
{\footnotesize \sl Department of Mathematics, Chalmers University of
Technology,
SE-412 96 G\"oteborg, Sweden}\\
{\footnotesize \sl jushenko\symbol{64}math.chalmers.se} \\
{\footnotesize \sl stanislp\symbol{64}math.chalmers.se} }
\date{}
\maketitle

 \footnotetext{ 2000 {\it Mathematics Subject
Classification}: 46L05,  46L07 (Primary) 47L55, 47L07, 47L30
(Secondary) }

\begin{abstract}
We study the question when for a given $*$-algebra $\mathcal{A}$ a
sequence of cones $C_n\in M_n(\mathcal{A})$  can be realized as
cones of positive operators in a faithful $*$-representation of
$\mathcal{A}$ on a Hilbert space. A characterization of operator
algebras which are completely boundedly isomorphic to
$C\sp*$-algebras is presented.

\medskip\par\noindent
KEYWORDS:  $*$-algebra,  operator algebra, $C\sp*$-algebra,
completely bounded homomorphism, Kadison's problem.
\end{abstract}

\section{Introduction}

Effros and Choi gave in~\cite{ChoiEffros} an abstract
characterization of the self-adjoint subspaces $S$ in $C^*$-algebras
with hierarchy of cones of positive elements in $M_n(S)$. In Section
2 of the present paper we are concerned with the same question for
$*$-subalgebras of $C\sp*$-algebras. More precisely, let
$\mathcal{A}$ be an associative $*$-algebra  with unit. In
Theorem~\ref{operatorstar} we present a characterization of the
collections of cones $C_n\subseteq M_n(\mathcal{A})$ for which there
exist faithful $*$-representation $\pi$ of $\mathcal{A}$ on a
Hilbert space $H$ such that $C_n$ coincides with the cone of
positive operators contained in $\pi^{(n)}(M_n(\mathcal{A}))$. Here
$\pi^{(n)}( (x_{i,j}) ) = ( \pi(x_{i,j}) )$ for every matrix
$(x_{i,j})\in M_n(\mathcal{A})$. Note that we do not assume that
$\mathcal{A}$ has any faithful $*$-representation. This follows from
the requirements imposed on the cones. In terms close to Effros and
Choi we give an abstract characterizations of matrix ordered (not
necessary closed) operator $*$-algebras up to complete order
$*$-isomorphism.

Based on this characterization we study the question when an
operator algebra is similar to a $C^*$-algebra.

Let $\mathcal{B}$ be a unital (closed) operator algebra in $B(H)$.
In~\cite{merdy} C. Le Merdy  presented necessary and sufficient
conditions for $\mathcal{B}$ to be self-adjoint. These conditions
involve all completely isometric representations of $\mathcal{B}$ on
Hilbert spaces.  Our characterization  is different in the following
respect.  If $S$ is a bounded invertible operator in $B(H)$ and
$\mathcal{A}$ is a $C\sp*$-algebra in $B(H)$ then the operator
algebra $S^{-1} \mathcal{A} S$ is not necessarily self-adjoint  but
only isomorphic to a $C^*$-algebra via completely bounded
isomorphism with completely bounded inverse.  By Haagerup's theorem
every completely bounded isomorphism $\pi$ from a $C\sp*$-algebra
$\mathcal{A}$ to an operator algebra $\mathcal{B}$ has the form
$\pi(a)=S^{-1}\rho(a)S$, $a\in \mathcal{A}$, for some
$*$-isomorphism $\rho:\mathcal{A}\rightarrow B(H)$ and invertible
$S\in B(H)$. Thus the question whether  an operator algebra
$\mathcal{B}$ is completely boundedly isomorphic to a $C^*$-algebra
via isomorphism which has completely bounded inverse, is equivalent
to the question if there is bounded invertible operator $S$ such
that $S \mathcal{B} S^{-1}$ is a $C^*$-algebra.

We will present a criterion for an operator algebra $\mathcal{B}$ to
be completely boundedly isomorphic to a $C\sp*$-algebra in terms of
the existence of a collection of cones $C_n\in M_n(\mathcal{B})$
satisfying certain axioms (see def.~\ref{consdef}). The axioms are
derived from the properties of the cones of positive elements of a
$C^*$-algebra preserved under completely bounded isomorphisms.

The main results are contained in section 2. We define a
$*$-admissible sequence of cones in an operator algebra and present
a criterion in Theorem~\ref{main} for an operator algebra to be
completely boundedly isomorphic to a $C^*$-algebra.

In the last section we consider the operator algebras and
collections of cones associated with Kadison similarity problem.

\section{Operator realizations of matrix-ordered $*$-algebras. }
The aim of this section is to give necessary and sufficient
conditions on a sequences of cones $C_n\subseteq
M_n(\mathcal{A})_{sa}$ for a unital $*$-algebra $\mathcal{A}$ such
that $C_n$ coincides with the cone $M_n(\mathcal{A})\cap
M_n(B(H))^+$ for some realization of $\mathcal{A}$ as a
$*$-subalgebra of $B(H)$, where $M_n(B(H))^+$ denotes the set of
positive operators acting on $H^n = H\oplus \ldots \oplus H$.

In \cite{Popovych} it was proved that a $*$-algebra $\mathcal{A}$
with unit $e$ is a $*$-subalgebra of $B(H)$ if and only if  there
is an algebraically admissible cone on $\mathcal{A}$ such that $e$
is an Archimedean order unit. Applying this result to some
inductive limit of $M_{2^n}(\mathcal{A})$ we obtain the desired
characterization in Theorem \ref{operatorstar}.

First we give necessary definitions and fix  notations. Let
$\mathcal{A}_{sa}$ denote the set of self-adjoint elements in
$\mathcal{A}$. A subset $C\subset \mathcal{A}_{sa}$ containing unit
$e$ of $\mathcal{A}$ is \textit{algebraically admissible} cone (see
\cite{powers}) provided that
\begin{enumerate}[(i)]
\item   $C$ is a cone  in $\mathcal{A}_{sa}$, i.e. $\lambda
x+\beta y\in C$ for all $x$, $y\in C$ and $\lambda\geq 0$,
$\beta\geq 0$, $\lambda,\beta\in\mathbb{R}$; \item $C \cap (-C)
=\{0\}$; \item $x C x^* \subseteq C$ for every $x\in \mathcal{A}$;
\end{enumerate}

We call $e\in \mathcal{A}_{sa}$ an \textit{order unit} if for
every $x\in \mathcal{A}_{sa}$ there exists $r>0$ such that
$re+x\in C$. An order unit $e$ is \textit{Archimedean} if $re+x\in
C$ for all $r>0$ implies that $x\in C$

In what follows we will need the following.
\begin{theorem}\label{onecone}
Let $\mathcal{A}$ be a  $*$-algebra with unit $e$ and $C\subseteq
\mathcal{A}_{sa}$ be a cone containing $e$. If $x C x^* \subseteq
C$ for every $x\in \mathcal{A}$ and $e$ is an Archimedean order
unit then there is a  unital $*$-representation $\pi: \mathcal{A}
\to B(H)$ such that $\pi(C) = \pi(\mathcal{A}_{sa})\cap B(H)^+$.
Moreover
\begin{enumerate} \item\label{onecone1} $\norm{\pi(x)} = \inf \{r>0 : r^2 \pm x^*x\in C\}$.
\item\label{onecone2} $\ker \pi =\{ x : x^*x \in C \cap (-C)\}$.
\item\label{onecone3} If $C \cap (-C) = \{0\}$ then $\ker \pi =\{
0\}$, $\norm{\pi(a)} = \inf \{r>0 : r \pm a\in C\}$ for all
$a=a^*\in \mathcal{A}$ and $\pi(C) = \pi(\mathcal{A})\cap B(H)^+$
\end{enumerate}
\end{theorem}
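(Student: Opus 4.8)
The plan is to realise $\pi$ as a direct sum of Gelfand--Naimark--Segal representations indexed by the states of the ordered $*$-algebra $(\mathcal{A},C,e)$, and then to read the three assertions off the explicit form of this direct sum. First I would set up the order-unit calculus. Because $e$ is an order unit, $\norm{a}_C:=\inf\{r>0:\ re-a\in C\ \text{and}\ re+a\in C\}$ is a finite seminorm on $\mathcal{A}_{sa}$; and for an arbitrary $x\in\mathcal{A}$ one has $x^{*}x=x^{*}ex\in C$ (apply (iii) with $x^{*}$ in place of $x$, using $e\in C$), so $q(x):=\inf\{r>0:\ r^{2}e-x^{*}x\in C\}$ is finite, $r^{2}e+x^{*}x\in C$ being automatic. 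Two facts are the backbone: (a) $C$ is closed in $\norm{\cdot}_C$ --- this is precisely the Archimedean hypothesis; (b) a separation lemma: calling a linear functional $f$ a \emph{state} if it is self-adjoint, $f(e)=1$ and $f(C)\subseteq[0,\infty)$, one has for $a=a^{*}$ that $a\in C$ iff $f(a)\ge 0$ for every state, that $a\in C\cap(-C)$ iff $f(a)=0$ for every state, and that $\norm{a}_C=\sup_f\abs{f(a)}$. Fact (b) follows from Hahn--Banach in $(\mathcal{A}_{sa},\norm{\cdot}_C)$ (passing to the quotient by $C\cap(-C)$ if this is non-trivial), using that $e$ is an interior point of $C$ --- indeed $\norm{a-e}_C<1$ forces $a\in C$ --- which allows a separating functional to be normalised to a state.

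Next, for each state $f$ I would run the GNS construction: with $N_f=\{x:\ f(x^{*}x)=0\}$, let $H_f$ be the Hilbert-space completion of $\mathcal{A}/N_f$ for $\langle x+N_f,y+N_f\rangle=f(y^{*}x)$, let $\pi_f$ be the left regular action, and let $\Omega_f$ be the cyclic unit vector representing $e$, so $f(x)=\langle\pi_f(x)\Omega_f,\Omega_f\rangle$. The only non-bookkeeping point is that $\pi_f$ is bounded: since $q(x)^{2}e-x^{*}x\in C$ we get $y^{*}\bigl(q(x)^{2}e-x^{*}x\bigr)y\in C$ by (iii), hence $f\bigl(y^{*}(q(x)^{2}e-x^{*}x)y\bigr)\ge 0$, i.e.\ $\norm{\pi_f(x)(y+N_f)}^{2}\le q(x)^{2}\norm{y+N_f}^{2}$; thus $\norm{\pi_f(x)}\le q(x)$. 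Consequently $\pi:=\bigoplus_f\pi_f$ on $H:=\bigoplus_f H_f$ is a unital $*$-representation with $\norm{\pi(x)}=\sup_f\norm{\pi_f(x)}\le q(x)$.

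Then I would pin down $\norm{\pi(x)}$ exactly and extract the three items. For the reverse norm inequality, use the $C^{*}$-identity in $B(H)$: $\norm{\pi(x)}^{2}=\norm{\pi(x)^{*}\pi(x)}=\norm{\pi(x^{*}x)}$, which reduces everything to the claim $\norm{\pi(a)}=\norm{a}_C$ for $a=a^{*}$ (then $\norm{x^{*}x}_C=q(x)^{2}$, as $x^{*}x\in C$). For this claim ``$\le$'' again comes from $\norm{a}_C e\pm a\in C$ together with (iii) (sandwiching by vectors $y+N_f$), while ``$\ge$'' comes from $\langle\pi_f(a)\Omega_f,\Omega_f\rangle=f(a)$ and $\norm{a}_C=\sup_f\abs{f(a)}$ from Fact (b). This yields item \ref{onecone1} (the sign in $r^{2}\pm x^{*}x$ is immaterial). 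Item \ref{onecone2}: $\ker\pi=\{x:q(x)=0\}=\{x:\ r^{2}e-x^{*}x\in C\ \forall r>0\}=\{x:\ x^{*}x\in C\cap(-C)\}$, the last equality by the Archimedean hypothesis. The cone identity $\pi(C)=\pi(\mathcal{A}_{sa})\cap B(H)^{+}$: ``$\subseteq$'' because $a\in C$ gives $\langle\pi_f(a)(y+N_f),y+N_f\rangle=f(y^{*}ay)\ge 0$ for all $f,y$; ``$\supseteq$'' because $\pi(a)\ge 0$ with $a=a^{*}$ forces $f(a)=\langle\pi_f(a)\Omega_f,\Omega_f\rangle\ge0$ for every state, so $a\in C$ by Fact (b). Item \ref{onecone3}: if $C\cap(-C)=\{0\}$ then $\ker\pi=\{x:x^{*}x=0\}$, and expanding $(x+\lambda e)^{*}(x+\lambda e)\in C$ for $\lambda\in\{t,-t,it,-it\}$ with $t>0$, dividing by $t$ and using the Archimedean hypothesis shows $x+x^{*}\in C\cap(-C)=\{0\}$ and $i(x-x^{*})\in C\cap(-C)=\{0\}$, whence $x=0$, so $\pi$ is faithful; the norm formula is the already-proved self-adjoint case; and $\pi(C)=\pi(\mathcal{A})\cap B(H)^{+}$ since every self-adjoint element of $\pi(\mathcal{A})$ is $\pi(b)$ with $b=b^{*}$.

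The crux --- and the one place where the Archimedean condition and the quasi-multiplicativity $xCx^{*}\subseteq C$ genuinely interact --- is the package ``Fact (b) plus $\norm{\pi(a)}=\norm{a}_C$''; without the representation, even the inequality $q(a)\le\norm{a}_C$ (equivalently, that $re\pm a\in C$ implies $r^{2}e-a^{2}\in C$) is not a soft consequence of the axioms, since it would amount to positivity of a symmetrised product. Everything else is routine verification. As an alternative to the direct construction one can observe that $N:=\{x:x^{*}x\in C\cap(-C)\}$ is a two-sided $*$-ideal, pass to $\mathcal{A}/N$ where the induced cone is algebraically admissible with $\bar e$ an Archimedean order unit, and invoke the theorem of \cite{Popovych} quoted above; but checking that $N$ is a $*$-ideal and that the resulting representation detects $C$ requires essentially the same analysis.
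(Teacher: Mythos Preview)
Your argument is correct and complete; the Hahn--Banach separation (your Fact~(b)) together with the GNS direct sum over all states does exactly what is needed, and your verification of items~\ref{onecone1}--\ref{onecone3} is clean.

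The paper takes a different route. Rather than building $\pi$ as $\bigoplus_f\pi_f$, it defines the order seminorm $\|a\|=\inf\{r>0:re\pm a\in C\}$ on $\mathcal{A}_{sa}$, shows (citing \cite{Popovych}) that $|x|=\sqrt{\|x^*x\|}$ is a pre-$C^*$-seminorm, and takes $\pi$ to be the canonical map into the $C^*$-completion $\overline{\mathcal{A}/N}$. The substantive difference lies in proving $\pi(C)\subseteq B(H)^+$. You get this for free from the GNS form: $\langle\pi_f(a)(y+N_f),y+N_f\rangle=f(y^*ay)\ge0$. The paper instead argues internally: for $x\in C$ one must exhibit $\lambda$ with $|\lambda e-x|\le\lambda$, which unwinds to $\varepsilon e+x(2\lambda e-x)\in C$ for all $\varepsilon>0$; this is obtained from the purely algebraic observation that for commuting $x,y\in C$ one has $xy(x+y)=xyx+yxy\in C$, applied with $y=re-x$. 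So the paper trades your functional-analytic separation lemma for an algebraic positivity trick, while you trade their pre-$C^*$-norm verification for the state machinery. Your approach makes the formula $\|\pi(a)\|=\|a\|_C$ for $a=a^*$ transparent (it is $\sup_f|f(a)|$), whereas the paper extracts it only after establishing $\pi(C)=\pi(\mathcal{A}_{sa})\cap B(H)^+$. Either way the Archimedean hypothesis is the pivot; your remark that $q(a)\le\|a\|_C$ is not a soft algebraic fact is exactly why the paper needs the commuting-product identity.
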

\begin{proof}
Following the same lines as in \cite{Popovych} one obtains that the
function $\| \cdot\|: \mathcal{A}_{sa}\to \mathbb{R}_+$ defined as
$$ \| a \| = \inf \{ r> 0: re \pm a \in C \} $$
is a seminorm on $\mathbb{R}$-space  $\mathcal{A}_{sa}$ and $|x| =
\sqrt{\|x^*x\|}$ for $x\in \mathcal{A}$ defines a pre-$C\sp*$-norm
on $\mathcal{A}$. If $N$ denote the null-space of  $|\cdot|$ then
the completion $\mathcal{B} = \overline{\mathcal{A}/ N}$ with
respect to this norm is a $C\sp*$-algebra and canonical epimorphism
$\pi: \mathcal{A} \to \mathcal{A}/ N$ extends to a unital
$*$-homomorphism $\pi: \mathcal{A}\to \mathcal{B}$. We can assume
without loss of generality that $\mathcal{B}$ is a concrete
$C\sp*$-algebra in  $B(H)$ for some Hilbert space $H$. Thus
$\pi:\mathcal{A} \to B(H)$ can be regarded as a unital
$*$-representation. Clearly,
 $$\|\pi(x)\|= | x |\text{ for all } x\in \mathcal{A}.$$
This implies~\ref{onecone1}.

To show \ref{onecone2} take $x\in \ker\pi$ then $\|\pi(x)\|=0$ and
$re\pm x^*x\in C$ for all $r>0$. Since $e$ is an Archimedean unit we
have $x^*x\in C\cap(-C)$. Conversely if  $x^*x\in C\cap(-C)$ then
$re\pm x^*x\in C$, for all $r>0$, hence $\|\pi(x)\|=0$ and
\ref{onecone2} holds.

Let us prove that $\pi(C) = \pi(\mathcal{A}_{sa})\cap B(H)^+$. Let
$x\in \mathcal{A}_{sa}$ and $\pi(x)\geq 0$. Then there exists a
constant $\lambda>0$ such that $\|\lambda I_{H}-\pi(x)\|\leq
\lambda$, hence $|\lambda e-x|\leq \lambda$. Since $\|a\|\leq |a|$
for all self-adjoint $a\in \mathcal{A}$, see Lemma 3.3 of
\cite{Popovych}, we have $\|\lambda e-x\|\leq \lambda$. Thus given
$\varepsilon>0$ we have $(\lambda+\varepsilon)e\pm (\lambda e-x)\in
C$. Hence $\varepsilon e+x\in {C}$. Since $e$ is Archimedean $x\in
{C}$.

Conversely, let $x\in {C}$. To show that $\pi(x)\geq 0$ it is
sufficient to find $\lambda>0$ such that $\|\lambda
I_{H}-\pi(x)\|\leq \lambda$. Since $\|\lambda
I_{H}-\pi(x)\|=|\lambda e-x|$ we will prove that $|\lambda
e-x|\leq \lambda$ for some $\lambda>0$. From the definition of
norm $|\cdot|$ we have the following equivalences:
\begin{eqnarray}|\lambda e-x|\leq \lambda
&\Leftrightarrow& (\lambda+\varepsilon)^2e-(\lambda e-x)^2\in
C\text{ for all }\varepsilon>0\\\label{ineq1} &\Leftrightarrow&
\varepsilon_1 e+x(2\lambda e-x)\geq 0,\text{ for all
}\varepsilon_1>0.
\end{eqnarray}

By condition (iii) in the definition  of algebraically admissible
cone we have that $xyx\in C$ and $yxy\in C$ for every $x,y\in C$. If
$xy=yx$ then $xy(x+y)\in C$. Since $e$ is an order unit we can
choose $r>0$ such that $r e-x\in C$. Put $y=r e-x$ to obtain $rx(r e
-x)\in C$. Hence (\ref{ineq1}) is satisfied with
$\lambda=\frac{r}{2}$. Thus $\|\lambda e-\pi(x)\|\leq \lambda$ and
$\pi(x)\geq 0$, which proves $\pi(C) = \pi(\mathcal{A}_{sa})\cap
B(H)^+$.

In particular, for $a=a^*$ we have
\begin{gather}\label{form}
\norm{\pi(a)} = \inf \{r>0 : r I_H \pm \pi(a) \in \pi(C)\}.
\end{gather}

We now in a position to prove \ref{onecone3}. Suppose that $C\cap
(-C) = 0$. Then $\ker \pi$ is a $*$-ideal and $\ker \pi \not= 0$
implies that there exists a self-adjoint $0\not= a\in \ker\pi$, i.e.
$\abs{a} = 0$.  Inequality $\| a \| \le |a|$ implies $r e \pm a \in
C$ for all $r>0$. Since $e$ is Archimedean, $\pm a \in C$, i.e.
$a\in C\cap (-C)$ and, consequently, $a = 0$.

Since $\ker \pi = 0$ the inclusion  $r I_H \pm \pi(a) \in \pi(C)$
is equivalent to $r e \pm a \in C$, and by~(\ref{form}),
$\norm{\pi(a)} = \inf \{r>0 : r e \pm a \in C\}$. Moreover if
$\pi(a) = \pi(a)^*$ then $a = a^*$. Thus we have $\pi(C) =
\pi(A)\cap B(H)^+$.
\end{proof}

We say that a $*$-algebra $\mathcal{A}$ with unit $e$ is a
\textit{matrix ordered} if the following conditions hold:
\begin{enumerate}[(a)]
    \item for each $n\geq 1$ we are given a cone $C_n$ in $M_n(\mathcal{A})_{sa}$ and
    $e\in C_1$,
    \item $C_n\cap(-C_n)=\{0\}$ for all $n$,
    \item for all $n$ and $m$ and all $A\in M_{n\times m}(\mathcal{A})$, we
    have that $A^*C_n A \subseteq C_m$,
\end{enumerate}

We call $e\in \mathcal{A}_{sa}$ a \textit{matrix order unit}
provided that  for every $n\in \mathbb{N}$ and every $x\in
M_n(\mathcal{A})_{sa}$ there exists $r>0$ such that $re_n+x\in C_n$,
where $e_n=e\otimes I_n$. A matrix order unit is called
\textit{Archimedean matrix order unit} provided that for all $n\in
\mathbb{N}$ inclusion $re_n+x\in C_n$ for all $r>0$ implies that
$x\in C_n$.

Let $\pi:\mathcal{A}\rightarrow B(H)$ be a $*$-representation.
Define $\pi^{(n)}:M_{n}(\mathcal{A})\rightarrow M_n(B(H))$ by
$\pi^{(n)}((a_{ij}))=(\pi(a_{ij}))$.

\begin{theorem}\label{operatorstar} If $\mathcal{A}$ is a matrix-ordered $*$-algebra
with a unit e which is Archime\-dean matrix order unit then there
exists a Hilbert space $H$ and a faithful unital $*$-representation
$\tau:\mathcal{A}\rightarrow B(H)$, such that
$\tau^{(n)}(C_n)=M_n(\tau(\mathcal{A}))^+$ for all $n$. Conversely,
every unital $*$-subalgebra $\mathcal{D}$ of $B(H)$ is
matrix-ordered by cones $M_n(\mathcal{D})^+=M_n(\mathcal{D})\cap
B(H)^+$ and the unit of this algebra is an Archimedean order unit.
\end{theorem}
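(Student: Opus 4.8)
The converse direction is routine bookkeeping: if $\mathcal{D}\subseteq B(H)$ is a unital $*$-subalgebra, the cones $M_n(\mathcal{D})^+ = M_n(\mathcal{D})\cap B(H^n)^+$ are genuine cones, their intersection with their negatives is $\{0\}$ because a positive operator that is also negative is $0$, and the compression axiom $A^*M_n(\mathcal{D})^+A\subseteq M_m(\mathcal{D})^+$ holds because $A^*TA\ge 0$ whenever $T\ge 0$; Archimedeanness of $e$ follows from norm-closedness of $B(H^n)^+$ in each $M_n(B(H))$ (if $rI_H+x\ge 0$ for all $r>0$ then $x\ge 0$ by letting $r\to 0$). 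So the substance is the forward direction.

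For the forward direction the plan is to collapse the matrix hierarchy onto a single algebra and invoke Theorem~\ref{onecone}. First I would form the $*$-algebra $\widetilde{\mathcal A} = \varinjlim M_{2^n}(\mathcal A)$, the inductive limit under the embeddings $M_{2^n}(\mathcal A)\hookrightarrow M_{2^{n+1}}(\mathcal A)$ sending $x\mapsto x\oplus x$ (or $x\mapsto \mathrm{diag}(x,0)$ — one should pick the unital one, $x\mapsto x\oplus x$, so that the unit $e_{2^n}$ maps to $e_{2^{n+1}}$ and the limit is unital). On $\widetilde{\mathcal A}$ define $\widetilde C$ to be the union of the images of the $C_{2^n}$. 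The three things to check are: (1) $\widetilde C$ is a cone stable under $x\mapsto axa^*$ for all $a\in\widetilde{\mathcal A}$ — this uses axiom (c) together with the observation that any finite amount of data lives inside some $M_{2^n}(\mathcal A)$, and that a general element $a$ of the ambient matrix algebra acts by a compression-type map already covered by (c); (2) the unit $\widetilde e$ is an order unit — immediate since $e$ is a matrix order unit at every level $2^n$; and (3) $\widetilde e$ is Archimedean — this is where the compatibility of the embeddings with the cones matters, i.e.\ one needs that $x\oplus x\in C_{2^{n+1}}$ implies $x\in C_{2^n}$, which follows from axiom (c) by compressing with the appropriate rectangular isometry $[I\ 0]^*$. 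Then Theorem~\ref{onecone}, case~\ref{onecone3}, applies because $C_n\cap(-C_n)=\{0\}$ forces $\widetilde C\cap(-\widetilde C)=\{0\}$; it yields a faithful unital $*$-representation $\widetilde\pi\colon\widetilde{\mathcal A}\to B(K)$ with $\widetilde\pi(\widetilde C)=\widetilde\pi(\widetilde{\mathcal A})\cap B(K)^+$.

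The final step is to unpack what this says level by level. Restricting $\widetilde\pi$ to the copy of $M_n(\mathcal A)$ inside $\widetilde{\mathcal A}$ (for $n$ not a power of $2$, sit $M_n(\mathcal A)$ inside $M_{2^k}(\mathcal A)$ as a corner for $2^k\ge n$ and use axiom (c) with the inclusion/projection to transfer the positivity statement), one recognizes $\widetilde\pi|_{M_n(\mathcal A)}$ as $\tau^{(n)}$ for a single faithful unital $*$-representation $\tau\colon\mathcal A\to B(H)$, where $H$ is the ``$n=1$ slice'' of $K$ and one checks the amplification is literally entrywise. The identity $\widetilde\pi(\widetilde C)=\widetilde\pi(\widetilde{\mathcal A})\cap B(K)^+$ then restricts to $\tau^{(n)}(C_n)=M_n(\tau(\mathcal A))^+$ for every $n$. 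Faithfulness of $\tau$ is inherited from faithfulness of $\widetilde\pi$ on the $n=1$ corner.

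The main obstacle I expect is item (3) above — verifying that $\widetilde e$ is Archimedean in the inductive limit. Archimedeanness is not obviously preserved by inductive limits in general (a sup of increasing sets of the form $\{x: re+x\in C_n\}$ need not behave well), so one has to argue carefully that if $r\widetilde e + \xi\in\widetilde C$ for all $r>0$, then already at the finite level $n=2^k$ where $\xi$ lives we have $re_n+x\in C_n$ for all $r>0$ — which requires knowing that membership of $r\widetilde e+\xi$ in $\widetilde C$ can be witnessed at level $n$ rather than only at some larger, $r$-dependent level. This is exactly where the compatibility of the embeddings with the cones (property that $x\oplus x\in C_{2^{n+1}}\Rightarrow x\in C_{2^n}$, via compression) is used, and it must be invoked uniformly in $r$.
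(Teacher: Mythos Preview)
Your overall strategy matches the paper's: form the unital inductive limit $\widetilde{\mathcal A}=\underrightarrow{\lim}\, M_{2^n}(\mathcal A)$ with cone $\widetilde C=\bigcup C_{2^n}$, verify the hypotheses of Theorem~\ref{onecone}, and descend. Your treatment of the Archimedean property is correct, and in fact the paper regards it as routine: since axiom~(c) gives $\widetilde C\cap M_{2^n}(\mathcal A)=C_{2^n}$, the question immediately reduces to a fixed level where the matrix Archimedean hypothesis applies. So the obstacle you flagged is not where the real difficulty lies.

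The genuine gap is in your final step. You write that ``one recognizes $\widetilde\pi|_{M_n(\mathcal A)}$ as $\tau^{(n)}$\ldots where $H$ is the $n=1$ slice of $K$ and one checks the amplification is literally entrywise.'' This is not correct. The representation $\tau=\widetilde\pi\circ\xi_0$ acts on \emph{all} of $K$, so $\tau^{(2^n)}$ acts on $K^{2^n}$, while $\widetilde\pi\circ\xi_n$ acts on $K$; there is no sense in which they literally coincide, and there is no ``$n=1$ slice''. Decomposing $\widetilde\pi\circ\xi_n$ via the matrix units of $M_{2^n}(\mathbb C)\subseteq M_{2^n}(\mathcal A)$ yields a unitary $U_n:H_n^{2^n}\to K$ and a representation $\rho_n:\mathcal A\to B(H_n)$ with $\widetilde\pi\circ\xi_n=U_n(\rho_n\otimes\mathrm{id}_{M_{2^n}})U_n^*$; comparing with level zero one finds $\tau\sim\rho_n\otimes E_{2^n}$, i.e.\ $\tau$ is $2^n$ copies of $\rho_n$. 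Hence $\tau^{(2^n)}\sim(\widetilde\pi\circ\xi_n)^{\oplus 2^n}$, not $\widetilde\pi\circ\xi_n$ itself. The paper repairs this by first replacing $\widetilde\pi$ with an infinite amplification $\widetilde\pi^{\alpha}$, which forces $\rho_n^{\alpha}\sim\rho_n$ for every $n$ and hence $\rho_0\sim\rho_n$, yielding a genuine unitary equivalence $\tau^{(2^n)}\sim\widetilde\pi\circ\xi_n$. One could alternatively bypass the amplification by observing that positivity is unaffected by multiplicity, so that $\tau^{(2^n)}(X)\ge 0\Leftrightarrow(\widetilde\pi\circ\xi_n)(X)\ge 0\Leftrightarrow X\in C_{2^n}$ already suffices here; but your proposal asserts something stronger and false, and the phrase ``literally entrywise'' shows you have not seen this issue.
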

\begin{proof}
Consider an inductive system of $*$-algebras and  unital injective
$*$-homomorphisms:
$$\phi_n:M_{2^n}(\mathcal{A})\rightarrow M_{2^{n+1}}(\mathcal{A}),\quad \phi_n(a)=\left(%
\begin{array}{cc}
  a & 0 \\
  0 & a \\
\end{array}%
\right)\text{ for all } n\geq 0, a\in M_{2^n}(\mathcal{A}).$$ Let
$\mathcal{B}=\underrightarrow{\lim} M_{2^n}(\mathcal{A})$ be the
inductive limit of this system. By $(c)$ in the definition of the
matrix ordered algebra  we have $\phi_n(C_{2^n})\subseteq
C_{2^{n+1}}$. We will identify $M_{2^n}(\mathcal{A})$ with a
subalgebra of $\mathcal{B}$ via canonical inclusions. Let
$C=\bigcup\limits_{n\ge 1} C_{2^n}\subseteq\mathcal{B}_{sa}$ and let
$e_{\infty}$ be the unit of $\mathcal{B}$.

Let us prove that $C$ is an algebraically admissible cone. Clearly,
$C$ satisfies conditions (i) and (ii) of definition of algebraically
admissible cone. To prove (iii) suppose that $x\in \mathcal{B}$ and
$a\in C$, then for sufficiently  large $n$ we have $a\in C_{2^n}$
and $x\in M_{2^n}(\mathcal{A})$. Therefore, by $(c)$, $x^*ax\in C$.
Thus (iii) is proved.  Since $e$ is an Archimedean matrix order unit
we obviously have that $e_{\infty}$ is also an Archimedean order
unit. Thus $*$-algebra $\mathcal{B}$ satisfies assumptions of
Theorem~\ref{onecone} and there is a faithful $*$-representation
$\pi:\mathcal{B}\rightarrow B(H)$ such that $\pi(C) =
\pi(\mathcal{B}) \cap B(H)^+$.

Let $\xi_n:M_{2^n}(\mathcal{A})\rightarrow \mathcal{B}$ be canonical
injections ($n\geq 0$). Then
$\tau=\pi\circ\xi_0:\mathcal{A}\rightarrow B(H)$ is an injective
$*$-homomorphism.

We claim that $\tau^{(2^n)}$ is unitary equivalent to $\pi\circ
\xi_n$. By replacing $\pi$ with $\pi^{\alpha}$, where $\alpha$ is an
infinite cardinal, we can assume that $\pi^{\alpha}$ is unitary
equivalent to $\pi$.  Since  $\pi\circ
\xi_n:M_{2^n}(\mathcal{A})\rightarrow B(H)$ is a $*$-homomorphism
there exist unique Hilbert space $K_n$, $*$-homomorphism
$\rho_n:\mathcal{A}\rightarrow B(K_n)$ and unitary operator
$U_n:K_n\otimes \mathbb{C}^{2^n}\rightarrow H$ such that
$$\pi\circ \xi_n=U_n(\rho_n\otimes id_{M_{2^n}})U_n^*.$$
For $a\in \mathcal{A}$, we have
\begin{eqnarray*} \pi\circ \xi_0(a)&=&\pi\circ\xi_n(a\otimes
E_{2^n})\\
&=&U_n(\rho_n(a)\otimes E_{2^n})U_n^*,
\end{eqnarray*}
where $E_{2^n}$ is the identity matrix in $M_{2^n}(\mathbb{C})$.
Thus $\tau(a)=U_0\rho_0(a)U_0^*=U_n(\rho_n(a)\otimes E_{2^n})U_n^*$.
Let $\sim$ stands for the unitary equivalence of representations.
Since $\pi\circ \xi_n\sim\rho_n\otimes id_{M_{2^n}}$ and
$\pi^{\alpha}\sim \pi$ we have that $\rho_n^{\alpha}\otimes
id_{M_{2^n}}\sim\pi^{\alpha}\circ\xi_n\sim \rho_n\otimes
id_{M_{2^n}}$. Hence $\rho^\alpha_n\sim\rho_n$. Thus $\rho_n\otimes
E_{2^n}\sim \rho_n^{2^n\alpha}\sim\rho_n$. Consequently
$\rho_0\sim\rho_n$ and $\pi\circ\xi_n\sim\rho_0\otimes
id_{M_{2^n}}\sim \tau\otimes id_{M_{2^n}}$. Therefore
$\tau^{(2^n)}=\tau\otimes id_{M_{2^n}}$ is unitary equivalent to
$\pi\circ \xi_n$.

What is left to show is that
$\tau^{(n)}(C_n)=M_{n}(\tau(\mathcal{A}))^+$. Note that $\pi\circ
\xi_n(M_{2^n}(\mathcal{A}))\cap B(H)^+=\pi(C_{2^n})$. Indeed, the
inclusion $\pi\circ\xi(C_{2^n})\subseteq M_{2^n}(\mathcal{A})\cap
B(H)^+$ is obvious. To show the converse take $x\in
M_{2^n}(\mathcal{A})$ such that $\pi(x)\geq 0$. Then $x\in C\cap
M_{2^n}(\mathcal{A})$. Using $(c)$ one can easily show that $C\cap
M_{2^n}(\mathcal{A})= C_{2^n}$. Hence $\pi\circ
\xi_n(M_{2^n}(\mathcal{A}))\cap B(H)^+=\pi(C_{2^n})$. Since
$\tau^{(2^n)}$ is unitary equivalent to $\pi\circ \xi_n$ we have
that $\tau^{(2^n)}(C_{2^n})=M_{2^n}(\tau(\mathcal{A}))\cap
B(H^{2^n})^+$.

Let now show that $\tau^{(n)}(C_n)=M_{n}(\tau(\mathcal{A}))^+$.
For $X\in M_n(\mathcal{A})$ denote
$$\widetilde{X}=\left(%
\begin{array}{cc}
  X & 0_{n\times (2^n-n)} \\
  0_{(2^n-n) \times n} & 0_{(2^n-n)\times (2^n-n)} \\
\end{array}%
\right)\in M_{2^n}(\mathcal{A}).$$ Then, clearly,
$\tau^{(n)}(X)\geq 0$ if and only if
$\tau^{(2^n)}(\widetilde{X})\geq 0$. Thus $\tau^{(n)}(X)\geq 0$ is
equivalent to $\widetilde{X}\in C_{2^n}$ which in turn is
equivalent to $X\in C_n$ by $(c)$.
\end{proof}

\section{Operator Algebras completely boundedly isomorphic to $C^*$-algebras.}
The algebra $M_{n}(B(H))$ of $n\times n$ matrices with entries in
$B(H)$ has a norm $\|\cdot\|_{n}$ via the identification of
$M_n(B(H))$ with $B(H^n)$, where $H^n$ is the direct sum of $n$
copies of a Hilbert space $H$. If $\mathcal{A}$ is a subalgebra of
$B(H)$ then $M_n(\mathcal{A})$ inherits a norm $\|\cdot\|_n$ via
natural inclusion into $M_n(B(H))$. The  norms $ \|\cdot\|_n $  are
called matrix norms on the operator algebra $\mathcal{A}$. In the
sequel all operator algebras will be assumed to be norm closed.

Operator algebras $\mathcal{A}$ and $\mathcal{B}$ are called
completely boundedly isomorphic if there is a completely bounded
isomorphism $\tau:\mathcal{A}\rightarrow \mathcal{B}$ with
completely bounded inverse. The aim of this section is to give
necessary and sufficient conditions for an operator algebra to be
completely boundedly isomorphic to a $C^*$-algebra. To do this we
introduce a concept of $*$-admissible cones which reflect the
properties of the cones of positive elements of a $C^*$-algebra
preserved under completely bounded isomorphism.
\begin{definition}\label{consdef}
Let $\mathcal{B}$ be an operator algebra with unit $e$. A sequence
$C_n\subseteq M_n(\mathcal{B})$ of closed (in the norm
$\|\cdot\|_n$) cones will be called \textit{$*$-admissible} if it
satisfies the following conditions:
\begin{enumerate}
\item $e\in C_1$;  \item
\begin{enumerate}[(i)]\item
$M_n(\mathcal{B})=(C_n-C_n)+i(C_n-C_n)$, for all $n\in \mathbb{N}$,
\item $C_n\cap (-C_n)=\{0\}$, for all $n\in \mathbb{N}$, \item
$(C_n-C_n)\cap i(C_n-C_n)=\{0\}$, for all $n\in \mathbb{N}$;
\end{enumerate}
\item \begin{enumerate}[(i)]\item for all $c_1$, $c_2\in C_n$ and
$c\in C_n$, we have that $(c_1-c_2)c(c_1-c_2)\in C_n$, \item for all
$n$, $m$ and $B\in M_{n\times m}(\mathbb{C})$ we have that $B^*C_n
B\subseteq C_m$;
\end{enumerate}
\item  there is $r>0$ such that for every positive integer $n$ and  $c\in C_{n}-C_{n}$ we have
$r \|c\|e_n +c \in C_n$, \item there exists a constant $K>0$ such
that for all $n\in\mathbb{N}$ and $a$, $b\in C_n-C_n$ we have
$\|a\|_n\leq K\cdot\|a+ib\|_n$.
\end{enumerate}
\end{definition}

\begin{theorem}\label{main}
If an operator algebra $\mathcal{B}$ has a $*$-admissible sequence
of cones then there is a completely bounded isomorphism $\tau$ from
$\mathcal{B}$ onto a $C^*$-algebra $\mathcal{A}$.  If, in addition,
one of the following conditions holds
\begin{enumerate}[(1)]
\item\label{cond1} there exists $r>0$ such that  for every $n\ge 1$ and  $c, d \in C_{n}$
we have    $ \|c + d \| \ge r \|c\|$.
\item $\| (x- i y) (x+i y)\| \ge \alpha \| x - i y  \| \| x+i y  \|$
for all $x, y \in C_n - C_n$
\end{enumerate}
then the inverse $\tau^{-1}: \mathcal{A} \to \mathcal{B}$ is also
completely bounded.

Conversely, if such isomorphism $\tau$ exists then $\mathcal{B}$
possesses a $*$-admissible sequence of cones and conditions $(1)$
and $(2)$ are satisfied.
\end{theorem}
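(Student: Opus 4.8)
The plan is to build, from a $*$-admissible sequence of cones on $\mathcal{B}$, a $*$-algebra structure together with an Archimedean matrix order unit and then invoke Theorem~\ref{operatorstar}. First I would use axiom (2)(i) together with (2)(iii) to define an involution: for $a\in M_n(\mathcal{B})$ write uniquely $a = (c_1-c_2) + i(c_3-c_4)$ with $c_j\in C_n$ — well-definedness of the decomposition into the ``real'' part $C_n-C_n$ and ``imaginary'' part $i(C_n-C_n)$ is exactly (2)(i) and (2)(iii) — and declare $a^* = (c_1-c_2) - i(c_3-c_4)$. One must check this is conjugate-linear, anti-multiplicative, and compatible across the matrix levels $M_n(\mathcal{B})\hookrightarrow M_{n+1}(\mathcal{B})$; multiplicativity of the involution is where axiom (3)(i), saying $(c_1-c_2)c(c_1-c_2)\in C_n$, gets used, since it forces the self-adjoint part $C_n-C_n$ to be closed under the symmetrized product and hence pins down $x^*$ on products. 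At this point $\mathcal{B}$ becomes a unital $*$-algebra whose self-adjoint part is $C_n-C_n$ at level $n$, and the $C_n$ are cones in $M_n(\mathcal{B})_{sa}$ with $e\in C_1$.

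Next I would verify the hypotheses of Theorem~\ref{operatorstar} for this $*$-algebra. Conditions (a) and (b) of matrix-orderedness are immediate from (1) and (2)(ii). For (c), the inclusion $B^*C_nB\subseteq C_m$ for scalar $B$ is axiom (3)(ii); to upgrade to $A^*C_nA\subseteq C_m$ for $A\in M_{n\times m}(\mathcal{B})$ one writes $A$ in terms of the cone decomposition and combines (3)(i) with (3)(ii) via the standard trick of embedding into a larger matrix algebra and conjugating by scalar matrices — this is the routine ``$2\times 2$ matrix trick'' argument. Axiom (4) says precisely that $e$ is a matrix order unit with a uniform bound $r$ independent of $n$; the Archimedean property then follows because the cones $C_n$ are norm-closed (by hypothesis in Definition~\ref{consdef}): if $r\|c\|e_n + c\in C_n$ for all $r>0$... more carefully, if $se_n + x\in C_n$ for all $s>0$, then letting $s\to 0$ and using closedness of $C_n$ gives $x\in C_n$. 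So Theorem~\ref{operatorstar} supplies a faithful unital $*$-representation $\tau:\mathcal{B}\to B(H)$ with $\tau^{(n)}(C_n)=M_n(\tau(\mathcal{B}))^+$. Let $\mathcal{A}$ be the norm closure of $\tau(\mathcal{B})$ in $B(H)$; since $\tau(\mathcal{B})$ is $*$-closed, $\mathcal{A}$ is a $C^*$-algebra. Here one should note $\tau(\mathcal{B})$ need not be closed a priori, but complete boundedness of $\tau$ (established next) will let us control this.

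Then I would prove $\tau$ is completely bounded. For $x\in M_n(\mathcal{B})_{sa}=C_n-C_n$, axiom (4) gives $r\|x\|e_n\pm x\in C_n$ — apply it to $\pm x$ — hence $\|\tau^{(n)}(x)\|=\|\tau^{(n)}(x)\|\le r\|x\|$ by the formula for the norm in terms of the order unit (Theorem~\ref{onecone}\eqref{onecone3} applied to $\mathcal{B}$ with the cone $C=\bigcup C_{2^n}$, or directly: $\tau^{(n)}(x)$ lies between $\mp r\|x\|I$). For general $a\in M_n(\mathcal{B})$, decompose $a=x+iy$ with $x,y\in C_n-C_n$; axiom (5) gives $\|x\|_n, \|y\|_n\le K\|a\|_n$, so $\|\tau^{(n)}(a)\|\le \|\tau^{(n)}(x)\|+\|\tau^{(n)}(y)\|\le 2rK\|a\|_n$, uniformly in $n$. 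Hence $\tau$ is completely bounded (with $\|\tau\|_{cb}\le 2rK$), and in particular it extends to the norm closure, so WLOG $\tau(\mathcal{B})=\mathcal{A}$ after replacing $\mathcal{B}$ by its completion in the new norm — or simply observe $\tau(\mathcal{B})$ is already complete because it is $*$-closed with the $C^*$-norm pulled back. This establishes the first assertion.

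For the conditional statements about $\tau^{-1}$: the key identity is that for a self-adjoint element of a $C^*$-algebra, its norm is recovered from the order, so I must bound $\|x\|_n$ from above by $\|\tau^{(n)}(x)\|$ for $x\in C_n-C_n$, and then handle the $x+iy$ decomposition in the reverse direction. Under condition (1): for $x=c_1-c_2\in C_n-C_n$ with $\tau^{(n)}(x)$ having norm $\le 1$, one has $c_1\le c_2 + I$ in $\mathcal{A}$, and condition (1) applied inside $\mathcal{A}$ (transported via $\tau^{(n)}$, which preserves the cones) bounds $\|c_1\|$, hence $\|x\|_n\le\|c_1\|_n+\|c_2\|_n$ in terms of $\|\tau^{(n)}(x)\|$; this gives complete boundedness of $\tau^{-1}$ on self-adjoints, and then the decomposition $M_n(\mathcal{B})=(C_n-C_n)+i(C_n-C_n)$ together with axiom (2)(iii) gives a bounded projection onto the real part, finishing the estimate. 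Under condition (2): write $z=x+iy\in M_n(\mathcal{B})$, then $z^*z=(x-iy)(x+iy)$ and condition (2) bounds $\|z^*z\|=\|\tau^{(n)}(z)\|^2$ below by $\alpha\|z^*\|\,\|z\|=\alpha\|\bar z\|\,\|z\|$ where $\|\bar z\|=\|z\|$ in the $C^*$-algebra but we need it in the $\mathcal{B}$-norm — so one first notes $\|z\|_{\mathcal{B},n}$ is comparable to $\|\bar z\|_{\mathcal{B},n}$ via the cb-bound on $\tau$ already proven, and then $\|z\|_n^2 \le C\|z^*z\|_n$... this requires a bit of care but reduces to the previous estimates. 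I expect the main obstacle to be the well-definedness and multiplicativity of the involution in the first step: showing that the decomposition $a=(c_1-c_2)+i(c_3-c_4)$ respects products so that $(ab)^*=b^*a^*$, which is where axiom (3)(i) must be leveraged in a nonobvious polarization-type argument, and where one must be careful that the involution defined level-by-level on $M_n(\mathcal{B})$ is consistent with the matrix structure (i.e. $(a_{ij})^* = (a_{ji}^*)$ with the entrywise involution from $M_1(\mathcal{B})=\mathcal{B}$). The converse direction — that a $C^*$-algebra pulled back through a cb-isomorphism with cb-inverse carries such cones satisfying (1) and (2) — is comparatively routine: take $C_n = (\tau^{(n)})^{-1}(M_n(\mathcal{A})^+)$ and verify each axiom using the norm-order relations in $\mathcal{A}$ together with the cb-bounds on $\tau$ and $\tau^{-1}$.
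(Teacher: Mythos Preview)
Your high-level strategy coincides with the paper's: manufacture an involution $\sharp$ from the cone decomposition, verify the matrix-ordered axioms, and invoke the representation theorem to produce a faithful $*$-representation $\tau$ with $\tau^{(n)}(C_n)=M_n(\tau(\mathcal{B}))^+$. The paper carries this out on the closure $\overline{\mathcal{B}}_\infty$ of the inductive limit (Lemmas~\ref{involution}--\ref{ker}) and then applies Theorem~\ref{onecone} once, whereas you apply Theorem~\ref{operatorstar} directly to $\mathcal{B}$; since Theorem~\ref{operatorstar} is itself proved via that inductive limit, there is no essential difference here. Your direct bound $\|\tau^{(n)}\|\le 2rK$ from axioms (4) and (5) is in fact cleaner than the paper's route, which cites an automatic-continuity result (Doran--Belfi) for $\tau$ on $\overline{\mathcal{B}}_\infty$ and then deduces complete boundedness from the unitary equivalence $\tau|_{\mathcal{B}_{2^n}}\sim (\tau|_{\mathcal{B}})^{(2^n)}$.

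Two points need attention. First, your aside that ``$\tau(\mathcal{B})$ is already complete because it is $*$-closed with the $C^*$-norm pulled back'' is false: the paper's own example $C^1[0,1]\hookrightarrow C[0,1]$ right after the theorem shows a closed operator algebra with $*$-admissible cones whose image under such a $\tau$ is a non-closed $*$-subalgebra. So you cannot conclude without (1) or (2) that $\tau(\mathcal{B})$ is a $C^*$-algebra.

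Second, your treatment of the cb-inverse is where your sketch diverges most from the paper and is also weakest. Under condition~(1) your idea is salvageable but written confusingly: you should not pick an arbitrary decomposition $x=c_1-c_2$ and ``apply condition~(1) inside $\mathcal{A}$'' --- condition~(1) is a statement in the $\mathcal{B}$-norm. The clean version is: if $\|\tau^{(n)}(x)\|\le s$ with $x^\sharp=x$ then $se_n\pm x\in C_n$, and condition~(1) applied to $c=se_n+x$, $d=se_n-x$ gives $2s=\|c+d\|\ge r\|c\|$, hence $\|x\|_n\le s(1+2/r)$; the general case follows via real/imaginary parts in the $C^*$-algebra. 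Under condition~(2), however, ``reduces to previous estimates'' hides real content. The paper does not attempt a direct estimate here: it shows that condition~(2) is exactly the hypothesis of \cite[Theorem 34.3]{DoranBelfi}, giving an equivalent $C^*$-norm on $\overline{\mathcal{B}}_\infty$, so $\tau$ is isometric for that norm and the image is closed; then Johnson's uniqueness-of-norm theorem gives $\tau^{-1}$ bounded on the whole inductive limit, and the same amplification argument as before upgrades this to complete boundedness. If you want to avoid those citations, you need a spectral-radius argument: iterate $\|w^\sharp w\|\ge\alpha\|w^\sharp\|\|w\|$ on $w=z^\sharp z$ to get $\mathrm{spr}(z^\sharp z)\ge\alpha\|z^\sharp z\|_n\ge\alpha^2\|z^\sharp\|_n\|z\|_n$, note $\mathrm{spr}(z^\sharp z)=\|\tau^{(n)}(z)\|^2$ since $\tau^{(n)}$ preserves spectra and $\tau^{(n)}(z^\sharp z)\ge 0$, and combine with the already-proved bound $\|z^\sharp\|_n\ge(2rK)^{-1}\|\tau^{(n)}(z)\|$. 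None of this is in your sketch, and it is the step most likely to trip you up.
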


The proof will be divided into 4 lemmas.\\

Let $\{C_n\}_{n\geq 1}$ be a $*$-admissible sequence of cones of
$\mathcal{B}$.  Let $\mathcal{B}_{2^n}=M_{2^n}(\mathcal{B})$,
$\phi_n:\mathcal{B}_{2^n}\rightarrow \mathcal{B}_{2^{n+1}}$ be
unital homomorphisms given by $\phi_n(x)=\left(%
\begin{array}{cc}
  x & 0 \\
  0 & x \\
\end{array}%
\right)$, $x\in \mathcal{B}_{2^n}$. Denote by
$\mathcal{B}_{\infty}=\underrightarrow{\lim}\mathcal{B}_{2^n}$ the
 inductive limit of the system $(\mathcal{B}_{2^n},\phi_n)$. As
 all inclusions $\phi_n$ are unital $\mathcal{B}_{\infty}$ has a
 unit, denoted by $e_{\infty}$. Since $\mathcal{B}_{\infty}$ can be considered as a subalgebra of
a $C^*$-algebra of the corresponding inductive limit of
$M_{2^n}(B(H))$ we can define the closure  of $\mathcal{B}_{\infty}$
in this $C^*$-algebra denoted by $\overline{\mathcal{B}}_{\infty}$.

 Now we will define an involution on
$\mathcal{B}_{\infty}$. Let $\xi_n : M_{2^n} ( \mathcal{B} )\to
\mathcal{B}_{\infty}$ be the canonical morphisms. By $(3ii)$,
$\phi_n(C_{2^n})\subseteq C_{2^{n+1}}$. Hence $C= \bigcup\limits_{n}
\xi_n( C_{2^n})$ is a well defined cone in $\mathcal{B}_{\infty}$.
Denote by $\overline{C}$ its completion. By $(2i)$ and $(2iii)$, for
every $x\in \mathcal{B}_{2^n}$, we have $x=x_1+ix_2$ with
unique $x_1$, $x_2\in C_{2^n}-C_{2^n}$. By $(3ii)$ we have $\left(%
\begin{array}{cc}
  x_i & 0 \\
  0 & x_i \\
\end{array}%
\right)\in C_{2^{n+1}}-C_{2^{n+1}}$, $i=1,2$. Thus for every $x\in
B_{\infty}$ we have unique decomposition $x=x_1+ix_2$, $x_1\in C-C$,
$x_2\in C-C$. Hence the mapping $x\mapsto x^{\sharp}=x_1-ix_2$ is a
well defined involution on $\mathcal{B}_{\infty}$. In particular, we
have an involution on $\mathcal{B}$ which depends only on the cone
$C_1$.
\begin{lemma}\label{involution} Involution on $\mathcal{B}_{\infty}$
is defined by the involution on $\mathcal{B}$, i.e. for all
$A=(a_{ij})_{i,j}\in M_{2^n}(\mathcal{B})$
$$A^{\sharp} = (a_{ji}^{\sharp})_{i,j}.$$
\end{lemma}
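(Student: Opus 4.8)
The plan is to show that the conjugate‑linear involution $\sharp$ on $\mathcal{B}_{\infty}$, restricted to the level $M_{2^n}(\mathcal{B})$, coincides with the entrywise transpose‑adjoint $\Phi\colon (a_{ij})\mapsto (a_{ji}^{\sharp})$, where on the right $\sharp$ denotes the level‑one involution of $\mathcal{B}$. Write $N=2^n$. By the construction preceding the lemma, every $A\in M_N(\mathcal{B})$ admits a \emph{unique} decomposition $A=A_1+iA_2$ with $A_1,A_2\in C_N-C_N$ (existence by $(2i)$, uniqueness by $(2ii)$ and $(2iii)$), and, since $C_N-C_N\subseteq C-C$ and the decomposition with parts in $C-C$ is unique, this is precisely the decomposition through which $A^{\sharp}=A_1-iA_2$ is computed. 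As $\sharp$ on $\mathcal{B}$ is conjugate‑linear and of period two, so is $\Phi$; hence it suffices to prove that $\Phi$ fixes every element of $C_N$. Granting that, $\Phi$ fixes $C_N-C_N$ by $\mathbb{R}$‑linearity, and then for $A=A_1+iA_2$ as above, conjugate‑linearity of $\Phi$ gives $\Phi(A)=\Phi(A_1)-i\,\Phi(A_2)=A_1-iA_2=A^{\sharp}$.

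Thus the real task is the claim: \emph{if $A=(a_{ij})\in C_N$ then $a_{ij}=a_{ji}^{\sharp}$ for all $i,j$.} Here I would invoke axiom $(3ii)$ with \emph{rectangular} complex scalar matrices. First, compressing $A$ by the coordinate column $e_k\in M_{N\times 1}(\mathbb{C})$ gives $a_{kk}=e_k^{*}Ae_k\in C_1\subseteq C-C$, so every diagonal entry of $A$ is $\sharp$‑fixed; in particular $a_{kk}^{\sharp}=a_{kk}$. Next, for $k\neq l$, compressing $A$ by the matrix in $M_{N\times 2}(\mathbb{C})$ whose two columns are $e_k$ and $e_l$ places $\bigl(\begin{smallmatrix}a_{kk}&a_{kl}\\ a_{lk}&a_{ll}\end{smallmatrix}\bigr)$ into $C_2$, so it is enough to show that $\bigl(\begin{smallmatrix}\alpha&\beta\\ \gamma&\delta\end{smallmatrix}\bigr)\in C_2$ forces $\gamma=\beta^{\sharp}$, knowing already that $\alpha,\delta$ are $\sharp$‑fixed. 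For this, conjugate this $2\times 2$ element, within $C_2$ and again by $(3ii)$, by the Hadamard‑type unitaries $\bigl(\begin{smallmatrix}1&1\\ 1&-1\end{smallmatrix}\bigr)$ and $\bigl(\begin{smallmatrix}1&1\\ i&-i\end{smallmatrix}\bigr)$; the resulting members of $C_2$ have, by the same compression argument, $\sharp$‑fixed $(1,1)$‑entries, and a short computation identifies these entries as $\alpha+\delta+(\beta+\gamma)$ and $\alpha+\delta+i(\beta-\gamma)$ respectively. Hence $\beta+\gamma$ and $i(\beta-\gamma)$ are $\sharp$‑fixed, i.e. $\beta^{\sharp}+\gamma^{\sharp}=\beta+\gamma$ and (using conjugate‑linearity) $\gamma^{\sharp}-\beta^{\sharp}=\beta-\gamma$; adding these gives $\gamma^{\sharp}=\beta$, and applying $\sharp$ yields $\gamma=\beta^{\sharp}$.

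The one genuinely delicate point is this last step. The diagonal identity falls out of a single compression, but the off‑diagonal identity $\gamma=\beta^{\sharp}$ does not, and the idea I would rely on is that conjugating a positive‑type $2\times 2$ element by Hadamard‑type unitaries re‑routes the off‑diagonal data into diagonal positions, where it is forced to be $\sharp$‑self‑adjoint and can be read off. Everything else is bookkeeping: the passage between the abstract $\sharp$ and the entrywise $\Phi$ uses only $\xi_n(C_{2^n})\subseteq C$, the fact that the fixed‑point set of a conjugate‑linear involution is a real subspace, and the uniqueness of the decomposition $A=A_1+iA_2$ recorded just before the statement.
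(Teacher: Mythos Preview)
Your argument is correct, but it runs in the opposite direction from the paper's. The paper defines $A^{\circ}=(a_{ji}^{\sharp})$ and shows directly that every $\circ$-self-adjoint matrix lies in $C_{2^n}-C_{2^n}$: it splits $A$ into its diagonal and off-diagonal parts, writes each off-diagonal pair $a_{ij}\otimes E_{ij}+a_{ij}^{\sharp}\otimes E_{ji}$ (with $a_{ij}=a_{ij}'+ia_{ij}''$, $a_{ij}',a_{ij}''\in C_1-C_1$) explicitly as a sum of conjugates $B^{*}(\cdot)B$ of diagonal elements by scalar matrices, and invokes $(3ii)$ to place everything in $C_{2^n}-C_{2^n}$. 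You instead prove that every element of $C_N$ is $\Phi$-fixed, by compressing down to $1\times 1$ and $2\times 2$ blocks and then using Hadamard-type conjugations to push the off-diagonal data into a diagonal position where it must be $\sharp$-self-adjoint. Both routes rest entirely on $(3ii)$; the paper's is synthetic (build the self-adjoint matrix from cone pieces), yours is analytic (cut the cone element down until the constraint is visible). Your reduction to the $2\times 2$ case is arguably cleaner and isolates exactly where the nontrivial content lies, while the paper's version has the virtue of producing, for any $\circ$-self-adjoint $A$, an explicit expression as a difference of elements of $C_{2^n}$.
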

\begin{proof}
Assignment $A^\circ = (a_{ji}^{\sharp})_{i,j}$, clearly, defines an
involution on $M_{2^n}(\mathcal{B})$. We need to prove that
$A^{\sharp}=A^{\circ}$.

Let $A=(a_{ij})_{i,j}\in M_{2^n}(\mathcal{B})$ be self-adjoint
$A^{\circ}=A$. Then $A=\sum\limits_{i}a_{ii}\otimes
E_{ii}+\sum\limits_{i< j}(a_{ij}\otimes
E_{ij}+a_{ij}^{\sharp}\otimes E_{ji})$ and $a_{ii}^{\sharp}=a_{ii}$,
for all $i$. By $(3ii)$ we have $\sum\limits_{i}a_{ii}\otimes
E_{ii}\in C_{2^n}-C_{2^n}$. Since $a_{ij}=a_{ij}'+ia_{ij}''$ for
some $a_{ij}'$, $a_{ij}''\in C_{2^n}-C_{2^n}$ we have
\begin{eqnarray*}a_{ij}\otimes E_{ij}+a_{ij}^{\sharp}\otimes
E_{ji}&=&(a_{ij}'+ia_{ij}'')\otimes E_{ij}+(a_{ij}'-ia_{ij}'')\otimes E_{ji}\\
&=&(a_{ij}'\otimes E_{ij}+a_{ij}'\otimes E_{ji})+(ia_{ij}''\otimes
E_{ij}-ia_{ij}''\otimes E_{ji})\\
&=&(E_{ii}+E_{ji})(a_{ij}'\otimes E_{ii}+a_{ij}'\otimes
E_{jj})(E_{ii}+E_{ij})\\
&-&(a_{ij}'\otimes E_{ii}+a_{ij}'\otimes E_{jj})\\
&+&(E_{ii}-iE_{ji})(a_{ij}''\otimes E_{ii}+a_{ij}''\otimes
E_{jj})(E_{ii}+iE_{ij})\\
&-&(a_{ij}''\otimes E_{ii}+a_{ij}''\otimes E_{jj})\in
C_{2^n}-C_{2^n}.
\end{eqnarray*}
Thus $A\in C_{2^n}-C_{2^n}$ and $A^{\sharp}=A$. Since for every
$x\in M_{2^n}(\mathcal{B})$ there exist unique $x_1=x_1^{\circ}$
and $x_2=x_2^{\circ}$ in $M_{2^n}(\mathcal{B})$, such that
$x=x_1+ix_2$, and unique $x_1'=x_1'^{\sharp}$ and
$x_2'=x_2'^{\sharp}$, such that $x=x_1'+ix_2'$, we have that
$x_1=x_1^{\sharp}=x_1'$, $x_2=x_2^{\sharp}=x_2'$ and involutions
$\sharp$ and $\circ$ coincide.
\end{proof}
\begin{lemma}\label{invandcone}
Involution $x\to x^\sharp$ is continuous on $\mathcal{B}_{\infty}$
and extends to the involution on
$\overline{\mathcal{B}}_{\infty}$. With respect to this involution
$\overline{C}\subseteq (\overline{\mathcal{B}}_{\infty})_{sa}$ and
$x^{\sharp}\overline{C}x\subseteq\overline{C}$ for every $x\in
\overline{\mathcal{B}}_{\infty}$.
\end{lemma}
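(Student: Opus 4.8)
The plan is to establish the three assertions of Lemma~\ref{invandcone} in order: first continuity of the involution $\sharp$ on $\mathcal{B}_\infty$, then the inclusion $\overline{C}\subseteq(\overline{\mathcal{B}}_\infty)_{sa}$, and finally the covariance $x^\sharp\overline{C}x\subseteq\overline{C}$.

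For continuity, the key observation is that axiom $(5)$ of Definition~\ref{consdef} gives a uniform constant $K$ with $\|a\|_n\le K\|a+ib\|_n$ for all $a,b\in C_n-C_n$. Since the matrix norms on $M_{2^n}(\mathcal{B})$ are compatible with the norm on $\mathcal{B}_\infty$ (coming from the ambient inductive-limit $C^*$-algebra) and the decomposition $x=x_1+ix_2$ with $x_1,x_2\in C-C$ is the one used to define $\sharp$, for $x\in\mathcal{B}_\infty$ one gets $\|x_1\|\le K\|x\|$ and then $\|x_2\|\le\|x\|+\|x_1\|\le(1+K)\|x\|$, whence $\|x^\sharp\|=\|x_1-ix_2\|\le\|x_1\|+\|x_2\|\le(1+2K)\|x\|$. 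Thus $\sharp$ is bounded, hence uniformly continuous, and extends uniquely to a (bounded, conjugate-linear, multiplicative-reversing, involutive — all by density) involution on $\overline{\mathcal{B}}_\infty$. One should remark that by Lemma~\ref{involution} this extension agrees on each $M_{2^n}(\mathcal{B})$ with the entrywise adjoint, so it is genuinely "the" involution.

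For $\overline{C}\subseteq(\overline{\mathcal{B}}_\infty)_{sa}$: every $c\in C$ lies in some $\xi_n(C_{2^n})\subseteq C-C$, so its $\sharp$-decomposition has $x_1=c$, $x_2=0$, giving $c^\sharp=c$; i.e. $C\subseteq(\mathcal{B}_\infty)_{sa}$. Since $\sharp$ is continuous and the self-adjoint part is closed (as the fixed-point set of a continuous map), taking closures gives $\overline{C}\subseteq(\overline{\mathcal{B}}_\infty)_{sa}$. For the covariance $x^\sharp\overline{C}x\subseteq\overline{C}$: by bilinearity of multiplication and continuity it suffices to prove $x^\sharp c\,x\in C$ for $x\in\mathcal{B}_\infty$ and $c\in C$ (then pass to closures in each variable, using that $\overline{C}$ is a closed cone and multiplication is jointly continuous). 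For such $x,c$ pick $n$ large enough that both $x\in M_{2^n}(\mathcal{B})$ and $c\in\xi_n(C_{2^n})$; on $M_{2^n}(\mathcal{B})$ the involution $\sharp$ is the entrywise adjoint $x^\sharp=x^*$ (Lemma~\ref{involution}), so axiom $(3ii)$ — applied with the square matrix $B=x\in M_{2^n}(\mathcal{B})$, which is allowed since $(3ii)$ covers $B\in M_{n\times m}$ — yields $x^*C_{2^n}x\subseteq C_{2^n}$, i.e. $x^\sharp c\,x\in C$.

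**The main obstacle** I anticipate is bookkeeping around the norms: one must be careful that the norm $\|\cdot\|$ on $\mathcal{B}_\infty$ (inherited from the inductive-limit $C^*$-algebra) restricts on each $M_{2^n}(\mathcal{B})$ to $\|\cdot\|_{2^n}$, so that axiom $(5)$ can actually be invoked with a single constant $K$ independent of $n$; and that the $\sharp$-decomposition of a general $x\in\mathcal{B}_\infty$ is obtained from the decompositions at finite stages compatibly — this is exactly what the paragraph preceding the lemma sets up via $(2i)$, $(2iii)$, $(3ii)$, so it can be cited rather than redone. Everything else is a routine density-and-continuity argument, the only subtlety being to extend the covariance from finite-stage $x$ to $x\in\overline{\mathcal{B}}_\infty$, which follows since for fixed $c\in\overline{C}$ the map $x\mapsto x^\sharp c\,x$ is continuous and $\overline{C}$ is closed, and then fixing $x$ and letting $c$ range over $\overline{C}$ uses continuity in the second variable.
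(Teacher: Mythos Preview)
Your treatment of continuity and of $\overline{C}\subseteq(\overline{\mathcal{B}}_\infty)_{sa}$ is correct and matches the paper's argument (you are simply more explicit about the constants arising from axiom~(5)).

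The gap is in the covariance step. You write that axiom~(3ii) applied with $B=x\in M_{2^n}(\mathcal{B})$ gives $x^\sharp C_{2^n}x\subseteq C_{2^n}$, ``which is allowed since (3ii) covers $B\in M_{n\times m}$''. But Definition~\ref{consdef}(3ii) requires $B\in M_{n\times m}(\mathbb{C})$, i.e.\ a \emph{scalar} matrix; it does \emph{not} assert $A^\sharp C_n A\subseteq C_m$ for $A$ with entries in $\mathcal{B}$. (That stronger statement is condition~(c) in the definition of a matrix-ordered $*$-algebra, a different set of axioms.) So the inclusion $x^\sharp c\,x\in C_{2^n}$ for general $x\in M_{2^n}(\mathcal{B})$ is precisely the nontrivial content here and cannot be read off from (3ii) alone.

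The paper bridges this gap with a $2\times 2$ matrix identity. Writing $x=x_1+ix_2$ with $x_j\in C_{2^n}-C_{2^n}$, one checks
\[
x^\sharp c\,x=\tfrac12\,
\begin{pmatrix}1&1\end{pmatrix}
M
\begin{pmatrix}c&0\\0&c\end{pmatrix}
M
\begin{pmatrix}1\\1\end{pmatrix},
\qquad
M=\begin{pmatrix}-x_1&-ix_2\\ix_2&x_1\end{pmatrix}.
\]
By Lemma~\ref{involution}, $M=M^\sharp$, hence $M\in C_{2^{n+1}}-C_{2^{n+1}}$; then (3i) gives $M\,\mathrm{diag}(c,c)\,M\in C_{2^{n+1}}$, and finally (3ii) with the scalar vector $(1,1)^T$ compresses this back into $C_{2^n}$. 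In other words, (3i) is essential: it is what lets one conjugate by an element of $C-C$ rather than only by scalar matrices. Your proof sketch never invokes (3i), which is why it cannot close.
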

\begin{proof}
Consider a convergent net $\{x_i\}\subseteq\mathcal{B}_{\infty}$
with the limit $x\in \mathcal{B}_{\infty}$. Decompose
$x_i=x_i'+ix_i''$ with $x_i', x_i'' \in C-C$. By (5), the nets
$\{x_i'\}$ and $\{x_i''\}$ are also convergent. Thus $x=a+ib$, where
$a=\lim x_i'\in \overline{C-C}$, $b=\lim x_i''\in \overline{C-C}$
and $\lim x_i^\sharp = a -i b$. Therefore the involution defined on
$\mathcal{B}_{\infty}$ can be extended by continuity to
$\overline{\mathcal{B}}_{\infty}$ by setting $x^\sharp=  a- ib$.

 Under this involution $\overline{C}\subseteq
(\overline{\mathcal{B}}_{\infty})_{sa}=\{x\in\overline{\mathcal{B}}_{\infty}:x=x^{\sharp}\}$.

 Let us show that  $x^{\sharp}cx\in\overline{C}$ for every
$x\in\overline{\mathcal{B}}_{\infty}$ and $c\in\overline{C}$. Take
firstly $c\in C_{2^n}$ and $x\in \mathcal{B}_{2^n}$. Then
$x=x_1+ix_2$ for some $x_1$, $x_2\in C_{2^n}-C_{2^n}$ and
\begin{gather*}
(x_1+ix_2)^{\sharp}c(x_1+ix_2)=(x_1-ix_2)c(x_1+ix_2)\\
= \frac{1}{2}\left(%
\begin{array}{cc}
  1 & 1\\
\end{array}%
\right)\left(%
\begin{array}{cc}
  -x_1 & -ix_2 \\
  ix_2 & x_1 \\
\end{array}%
\right)\left(%
\begin{array}{cc}
  c & 0 \\
  0 & c\\
\end{array}%
\right)\left(%
\begin{array}{cc}
  -x_1 & -ix_2 \\
  ix_2 & x_1 \\
\end{array}%
\right)\left(%
\begin{array}{c}
  1 \\
  1 \\
\end{array}%
\right)
\end{gather*}
By (3i), Lemma \ref{involution} and (3ii) $x^{\sharp}cx\in
C_{2^n}$.

Let now $c\in \overline{C}$ and
$x\in\overline{\mathcal{B}}_{\infty}$. Suppose that $c_i\rightarrow
c$ and $x_i\rightarrow x$, where $c_i\in C$, $x_i\in
\mathcal{B}_{\infty}$. We can assume that $c_i$, $x_i\in
B_{2^{n_i}}$. Then $x_i^{\sharp}c_ix_{i}\in C_{2^{n_i}}$ for all $i$
and since it is convergent we have $x^{\sharp}cx\in \overline{C}$.
\end{proof}

\begin{lemma}\label{ArchOrder}
The unit of $\overline{\mathcal{B}}_{\infty}$ is an Archimedean
order unit and
$(\overline{\mathcal{B}}_{\infty})_{sa}=\overline{C}-\overline{C}$.
\end{lemma}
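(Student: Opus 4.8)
The plan is to verify the two assertions separately, using the axioms of $*$-admissibility together with the structural facts already established in Lemmas~\ref{involution} and~\ref{invandcone}. I will write $e_\infty$ for the unit of $\overline{\mathcal{B}}_\infty$ and recall that $\overline{C}$ is a closed cone with $\overline{C}\subseteq(\overline{\mathcal{B}}_\infty)_{sa}$.

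First I would prove that $e_\infty$ is an order unit, i.e.\ that every self-adjoint $x$ lies in $\overline{C}-\overline{C}$ and is dominated by a multiple of $e_\infty$. Fix $x=x^\sharp\in\overline{\mathcal{B}}_\infty$. Approximating by an element of $\mathcal{B}_\infty$ and using continuity of the involution (Lemma~\ref{invandcone}), it suffices to treat $x=x^\sharp\in\mathcal{B}_{2^n}$ for some $n$; by Lemma~\ref{involution} such an $x$ lies in $C_{2^n}-C_{2^n}$, hence a fortiori in $\overline{C}-\overline{C}$. This already gives $(\overline{\mathcal{B}}_\infty)_{sa}\supseteq$ the closure of $C-C$, and since the reverse inclusion is clear (each element of $C-C$ is self-adjoint and the self-adjoint part is closed), one obtains $(\overline{\mathcal{B}}_\infty)_{sa}=\overline{C-C}$; a short separate argument, using axiom (5) exactly as in the proof of Lemma~\ref{invandcone} (the nets $x_i'$, $x_i''$ converge), shows $\overline{C-C}=\overline{C}-\overline{C}$, so $(\overline{\mathcal{B}}_\infty)_{sa}=\overline{C}-\overline{C}$. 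For the order-unit property, apply axiom (4): for $c\in C_{2^n}-C_{2^n}$ we have $r\|c\|e_{2^n}+c\in C_{2^n}$, where $e_{2^n}$ is mapped to $e_\infty$ under $\xi_n$; passing to the inductive limit and then to the closure, for any $x\in\overline{C}-\overline{C}$ we get $r\|x\|e_\infty+x\in\overline{C}$, so $e_\infty$ is an order unit.

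Next I would prove the Archimedean property: if $s e_\infty+x\in\overline{C}$ for all $s>0$, then $x\in\overline{C}$. The key point here is that $\overline{C}$ is \emph{norm-closed} by construction, so it is enough to produce, for each $\varepsilon>0$, an element of $\overline{C}$ within $\varepsilon$ of $x$; taking $s=\varepsilon/\|e_\infty\|$ (or simply $s\to 0^+$) and using that $s e_\infty+x\to x$ in norm as $s\to 0^+$ while each $s e_\infty+x\in\overline{C}$, closedness of $\overline{C}$ gives $x\in\overline{C}$ immediately. In fact the Archimedean property of a norm-closed order-compatible cone is essentially automatic once $e_\infty$ is an order unit and $\overline{C}$ is closed, so this step should be short.

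The main obstacle I anticipate is not the order-unit or Archimedean parts themselves but the bookkeeping in the identity $(\overline{\mathcal{B}}_\infty)_{sa}=\overline{C}-\overline{C}$: one must be careful that ``self-adjoint'' refers to the extended involution $\sharp$ on $\overline{\mathcal{B}}_\infty$ (which a priori is only defined by continuity from $\mathcal{B}_\infty$), that the decomposition $x=x_1+ix_2$ with $x_i\in\overline{C-C}$ furnished by Lemma~\ref{invandcone} is genuinely unique on the closure (this needs axiom (5) again, i.e.\ $(\overline{C-C})\cap i(\overline{C-C})=\{0\}$, obtained by a limiting argument from (2iii) using the bound in (5)), and that $C-C$ is dense in its closure inside the self-adjoint part. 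Once uniqueness of the real/imaginary decomposition is secured on $\overline{\mathcal{B}}_\infty$, the self-adjoint elements are exactly those with $x_2=0$, i.e.\ exactly $\overline{C-C}=\overline{C}-\overline{C}$, and the proof is complete.
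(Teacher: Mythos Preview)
Your overall strategy matches the paper's: use axiom (4) with a limiting argument to get the order-unit property on $\overline{C-C}$, then deduce the Archimedean property from closedness of $\overline{C}$. The Archimedean step is exactly as the paper does it.

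There is one genuine gap. You claim that $\overline{C-C}=\overline{C}-\overline{C}$ follows ``using axiom (5) exactly as in the proof of Lemma~\ref{invandcone} (the nets $x_i',x_i''$ converge)''. This does not work: axiom (5) controls the \emph{real/imaginary} decomposition $x=x'+ix''$ with $x',x''\in C_n-C_n$, not the \emph{positive/negative} decomposition $x=c-d$ with $c,d\in C_n$. From a convergent net $c_i-d_i\to x$ with $c_i,d_i\in C$ you have no bound forcing $c_i$ or $d_i$ to converge (indeed, no axiom in Definition~\ref{consdef} gives such a bound directly). The paper's fix is simple and uses what you have already proved: once you know $e_\infty$ is an order unit, any $a\in\overline{C-C}$ satisfies $r e_\infty-a\in\overline{C}$ for some $r>0$, whence
\[
a=r e_\infty-(r e_\infty-a)\in\overline{C}-\overline{C}.
\]
So you should reorder: first establish the order-unit property on $\overline{C-C}=(\overline{\mathcal{B}}_\infty)_{sa}$ via axiom (4) and a limit, and only then read off $\overline{C-C}=\overline{C}-\overline{C}$ from the order-unit inequality itself, rather than from axiom (5).
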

\begin{proof}
Firstly let us show that $e_{\infty}$ is an order unit. Clearly,
$(\overline{\mathcal{B}}_{\infty})_{sa}=\overline{C-C}$. For every
$a\in \overline{C-C}$, there is a net $a_i\in
C_{2^{n_i}}-C_{2^{n_i}}$ convergent to $a$. Since
$\sup\limits_{i}\|a_i\|<\infty$ there exists $r_1>0$ such that
$r_1e_{n_i}-a_i\in C_{2^{n_i}}$, i.e. $r_1e_{\infty}-a_i\in C$.
Passing to the limit we get $r_1e_{\infty}-a\in \overline{C}$.
Replacing $a$ by $-a$ we can find $r_2>0$ such that
$r_2e_{\infty}+a\in\overline{C}$. If $r=\max(r_1,r_2)$ then
$re_{\infty}\pm a\in \overline{C}$. This proves that $e_{\infty}$ is
an order unit and that for all $a\in\overline{C-C}$ we have
$a=re_{\infty}-c$ for some $c\in\overline{C}$. Thus
$\overline{C-C}\in \overline{C}-\overline{C}$. The converse
inclusion, clearly, holds. Thus
$\overline{C-C}=\overline{C}-\overline{C}$.

If $x\in(\overline{\mathcal{B}}_{\infty})_{sa}$ such that for every
$r>0$ we have $r+x\in \overline{C}$ then $x\in\overline{C}$ since
$\overline{C}$ is closed. Hence $e_{\infty}$ is an Archimedean order
unit.
\end{proof}

\begin{lemma}\label{ker}  $\mathcal{B}_{\infty}\cap \overline{C}=C$.
\end{lemma}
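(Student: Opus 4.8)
The inclusion $C\subseteq\mathcal{B}_{\infty}\cap\overline{C}$ is immediate, so the whole content is the reverse inclusion, and the plan is as follows. Given $x\in\mathcal{B}_{\infty}\cap\overline{C}$, write $x=\xi_n(y)$ for some $n$ and some $y\in M_{2^n}(\mathcal{B})$; it suffices to show $y\in C_{2^n}$. I would first record two elementary facts about the inductive system. Each connecting map $\phi_j$ sends $a$ to $\diag(a,a)$, hence is isometric; consequently every $\xi_j$ is isometric and the norm of $\mathcal{B}_{\infty}$ restricted to $M_{2^n}(\mathcal{B})$ is precisely $\|\cdot\|_{2^n}$. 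Moreover, as already noted in the paper, $\phi_j(C_{2^j})\subseteq C_{2^{j+1}}$, so the iterated diagonal embeddings $\iota_{m,n}=\phi_{m-1}\circ\cdots\circ\phi_n\colon M_{2^n}(\mathcal{B})\to M_{2^m}(\mathcal{B})$ carry $C_{2^n}$ into $C_{2^m}$ for $m\ge n$.

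Since $x\in\overline{C}$ lies in the closure of $C$ inside a normed space, there is a sequence $c_k\in C$ with $c_k\to x$. Write $c_k=\xi_{m_k}(d_k)$ with $d_k\in C_{2^{m_k}}$; replacing $m_k$ by $\max(m_k,n)$ and $d_k$ by its diagonal image $\iota_{\max(m_k,n),m_k}(d_k)\in C$, we may assume $m_k\ge n$ for all $k$. Using that $\xi_{m_k}$ is isometric and $x=\xi_{m_k}(\iota_{m_k,n}(y))$, the convergence $c_k\to x$ becomes $\|d_k-\iota_{m_k,n}(y)\|_{2^{m_k}}\to 0$.

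The key device is a contractive compression that undoes the diagonal embeddings while respecting the cones. For $m\ge n$ let $P_m\in M_{2^m\times 2^n}(\mathbb{C})$ be the isometry $\left(\begin{smallmatrix}I_{2^n}\\0\end{smallmatrix}\right)$ and put $\Psi_m(z)=P_m^*zP_m$, i.e. $\Psi_m(z)$ is the upper-left $2^n\times 2^n$ block of $z\in M_{2^m}(\mathcal{B})$. Then: (i) $\|\Psi_m(z)\|\le\|z\|$, since $\|P_m\|=1$; (ii) the upper-left block of $\iota_{m,n}(y)=\diag(y,\dots,y)$ is $y$, so $\Psi_m(\iota_{m,n}(y))=y$; and (iii) $\Psi_m(C_{2^m})\subseteq C_{2^n}$, which is exactly axiom $(3ii)$ applied to the complex matrix $B=P_m$. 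Applying $\Psi_{m_k}$ to the approximants gives $\Psi_{m_k}(d_k)\in C_{2^n}$ and
\[
\|\Psi_{m_k}(d_k)-y\|_{2^n}=\bigl\|\Psi_{m_k}\bigl(d_k-\iota_{m_k,n}(y)\bigr)\bigr\|_{2^n}\le\|d_k-\iota_{m_k,n}(y)\|_{2^{m_k}}\longrightarrow 0 .
\]
Since $C_{2^n}$ is norm-closed in $M_{2^n}(\mathcal{B})$ by hypothesis, $y\in C_{2^n}$, whence $x=\xi_n(y)\in C$, completing the argument.

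I expect the only genuinely delicate point to be the bookkeeping around the varying levels $m_k$: the approximating elements of $C$ need not sit at a single stage of the inductive system and may come from arbitrarily deep in it, so it is crucial both that the compressions $\Psi_m$ are contractive uniformly in $m$ and that axiom $(3ii)$ is available at every level. Everything else reduces to routine matrix manipulation.
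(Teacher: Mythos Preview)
Your proof is correct and rests on the same mechanism as the paper's: a contractive linear retraction onto $M_{2^n}(\mathcal{B})$, built from axiom~(3ii), that carries each $C_{2^m}$ into $C_{2^n}$; closedness of $C_{2^n}$ then finishes. The only substantive difference is the choice of retraction. You take the bare upper-left corner $\Psi_m(z)=P_m^*zP_m$, applied level by level. The paper instead defines $\psi_{n,m}([B_{ij}])=\diag(B_{11},\dots,B_{11})$, which lands back in the diagonal copy of $M_{2^n}(\mathcal{B})$ inside $M_{2^m}(\mathcal{B})$; because this version commutes with the connecting maps $\phi_{m,m+k}$, it assembles into a single globally defined contraction $\psi_n$ on the ambient $C^*$-inductive limit $\mathcal{D}$ satisfying $\psi_n|_{M_{2^n}(B(H))}=\mathrm{id}$.

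Your streamlined variant is cleaner for this lemma taken in isolation. The paper's more elaborate construction, however, is not gratuitous: the global maps $\psi_n$ are reused in the subsequent Remark (showing $\psi_n(x)\to x$ for every $x\in\mathcal{D}$) and in the proof of Theorem~\ref{main} to establish $\overline{C}\cap(-\overline{C})=\{0\}$. So the extra work buys a tool that survives past the lemma; your corner compressions would need to be repackaged to play that role.
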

\begin{proof}
Denote by $\mathcal{D}=\underrightarrow{\lim} M_{2^n}(B(H))$ the
$C\sp*$-algebra inductive limit corresponding to the inductive
system $\phi_n$ and denote
$\phi_{n,m}=\phi_{m-1}\circ\ldots\circ\phi_n:M_{2^n}(B(H))\rightarrow
M_{2^m}(B(H))$. For $n<m$ we identify $M_{2^{m-n}}(M_{2^n}(B(H)))$
with  $M_{2^m}(B(H))$ by omitting superfluous parentheses in a block
matrix $B=[B_{ij}]_{ij}$ with $B_{ij}\in M_{2^n}(B(H))$.

Denote by
 $P_{n,m}$  the operator  $diag (I, 0,\ldots, 0) \in M_{2^{m-n}}(M_{2^n}(B(H)))$
 and set $V_{n,m} = \sum_{k=1}^{2^{m-n}}
 E_{k,k-1}$.  Here $I$ is the identity matrix in $M_{2^n}(B(H))$ and
 $E_{k,k-1}$ is $2^n \times 2^n$ block matrix with identity operator at $(k,k-1)$-entry
 and all other entries being zero. Define an operator $\psi_{n,m}([B_{ij}]) = diag (B_{11}, \ldots,
 B_{11})$. It is easy to see that
 $$\psi_{n,m}([B_{ij}]) = \sum_{k=0}^{2^{m-n}-1} (V_{n,m}^k P_{n,m}) B
 (V_{n,m}^k
 P_{n,m})^*.$$ Hence by $(3ii)$
 \begin{gather}\label{inc}
 \psi_{n,m} (C_{2^m}) \subseteq \phi(C_{2^n})\subseteq C_{2^m}.
 \end{gather}
Clearly, $\psi_{n,m}$ is a linear contraction and
$$\psi_{n,m+k}\circ \phi_{m,m+k}=\phi_{m,m+k}\circ\psi_{n,m}$$
Hence there is a well defined contraction
$\psi_n=\lim\limits_{m}\psi_{n,m}:\mathcal{D}\rightarrow
\mathcal{D}$ such that
$$\psi_n|_{M_{2^n}(B(H))}=id_{M_{2^n}(B(H))},$$ where $M_{2^n}(B(H))$
is considered as a subalgebra in $\mathcal{D}$. Clearly,
$\psi_n(\overline{\mathcal{B}}_{\infty})\subseteq
\overline{\mathcal{B}}_{\infty}$ and
$\psi_n|_{\mathcal{B}_{2^n}}=id$. Consider $C$ and $C_{2^n}$ as
subalgebras in $\mathcal{B}_{\infty}$, by~(\ref{inc}) we have
$\psi_n:C\to C_{2^n}$.

To prove that $\mathcal{B}_{\infty}\cap \overline{C} =C$ take $c\in
\mathcal{B}_{\infty}\cap \overline{C}$. Then there is a net $c_j$ in
$C$ such that $\|c_j-c\|\to 0$. Since $c\in \mathcal{B}_{\infty}$,
$c\in \mathcal{B}_{2^n}$ for some $n$, and consequently
$\psi_n(c)=c$. Thus $$ \| \psi_n(c_j) - c \| = \| \psi_n(c_j-c)\|\le
\| c_j-c \|. $$ Hence $\psi_n(c_j)\to c$. But $\psi_n(c_j)\in
C_{2^n}$ and the latter is closed. Thus $c\in C$. The converse
inclusion is obvious.
\end{proof}
\begin{remark}\label{rem1}
Note that for every $x\in \mathcal{D}$
\begin{gather}\label{cutdiag}
\lim_n \psi_n(x) = x.
\end{gather}
Indeed,  for every $\varepsilon > 0$ there is $x\in M_{2^n}(B(H))$
such that $\| x-x_n \|< \varepsilon$. Since $\psi_n$ is a
contraction and $\psi_n(x_n) = x_n$ we have
\begin{eqnarray*}
\norm{\psi_n(x) - x} &\le& \norm{\psi_n(x)-x_n}+ \norm{x_n-x}\\
&=&\norm{\psi_n(x-x_n)}+\norm{x_n-x}\le 2 \varepsilon.
\end{eqnarray*}
Since $x_n\in M_{2^n}(B(H))$ also belong to $M_{2^m}(B(H))$ for all
$m\ge n$, we have that $\norm{\psi_m(x) - x}\le 2 \varepsilon$. Thus
$\lim\limits_{n} \psi_n(x) = x$.
\end{remark}

\noindent {\bf Proof of  Theorem~\ref{main}. } By Lemma
\ref{invandcone} and \ref{ArchOrder} the cone $\overline{C}$ and the
unit $e_{\infty}$ satisfies all assumptions of Theorem
\ref{onecone}. Thus there is a homomorphism
$\tau:\overline{\mathcal{B}}_{\infty}\rightarrow B(\widetilde{H})$
such that $\tau(a^{\sharp})=\tau(a)^*$ for all
$a\in\overline{\mathcal{B}}_{\infty}$. Since the image of $\tau$ is
a $*$-subalgebra of $B(\widetilde{H})$  we have that $\tau$ is
bounded by ~\cite[(23.11), p. 81]{DoranBelfi}. The arguments at the
end of the proof of Theorem \ref{operatorstar} show that the
restriction of $\tau$ to ${\mathcal{B}_{2^n}}$ is unitary equivalent
to the $2^n$-amplification of $\tau|_\mathcal{B}$. Thus
$\tau|_\mathcal{B}$ is completely bounded.

Let us prove that $\ker(\tau)=\{0\}$. By Theorem
\ref{operatorstar}.\ref{onecone3} it is sufficient to show that
$\overline{C}\cap(-\overline{C})=0$. If $c, d\in \overline{C}$ such
that $c+d=0$ then $c=d=0$. Indeed, for every $n\ge 1$, $\psi_n(c)
+\psi_n(d) = 0$. By Lemma \ref{ker}, we have
$$ \psi_n(\overline{C}) \subseteq \overline{C}\cap \mathcal{B}_{2^n} = C_{2^n}.$$
Therefore $\psi_n(c)$, $\psi_n(d)\in C_{2^n}$. Hence $\psi_n(c)=
-\psi_n(d)\in C_{2^n} \cap (- C_{2^n})$ and, consequently,
$\psi_n(c)= \psi_n(d) = 0$. Since $\norm{\psi_n(c)-c}\to 0$ and
$\norm{\psi_n(d)-d}\to 0$ by Remark \ref{rem1}, we have that
$c=d=0$. If $x\in \overline{C}\cap(-\overline{C})$ then $x+(-x)=0$,
$x, -x\in \overline{C}$ and $x=0$. Thus $\tau$ is injective.

We will show that the image of $\tau$ is closed if one of the
conditions $(1)$ or $(2)$ of the statement holds.

Assume firstly that operator algebra $\mathcal{B}$ satisfies the
first condition. Since $\tau(\overline{\mathcal{B}}_{\infty}) =
\tau(\overline{C}) -\tau(\overline{C}) +i
(\tau(\overline{C})-\tau(\overline{C}))$ and $\tau(\overline{C})$ is
exactly the set of positive operators in the image of $\tau$, it is
suffices to prove that $\tau(\overline{C})$ is closed. By Theorem
\ref{onecone}.\ref{onecone3}, for self-adjoint (under involution
$\sharp$) $x\in \overline{\mathcal{B}}_{\infty}$ we have
$$ \|\tau(x)\|_{B(\widetilde{H})} = \inf\{ r>0: re_{\infty}\pm
x\in \overline{C}\}.
$$
If $\tau(c_\alpha)\in \tau(C)$ is a Cauchy net in $B(\widetilde{H})$
then for every $\varepsilon>0$ there is $\gamma$ such that
$\varepsilon\pm (c_\alpha-c_\beta)\in \overline{C}$ when  $\alpha\ge
\gamma$ and $\beta\ge \gamma$. Since $\overline{C}\cap
\mathcal{B}_{\infty} = C$, $\varepsilon\pm (c_\alpha-c_\beta)\in C$.
Denote $c_{\alpha \beta} = \varepsilon+ (c_\alpha-c_\beta)$ and
$d_{\alpha \beta} = \varepsilon- (c_\alpha-c_\beta)$. The set of
pairs $(\alpha,\beta)$ is directed if $(\alpha,\beta)\ge
(\alpha_1,\beta_1)$ iff $\alpha\ge \alpha_1$ and $\beta\ge \beta_1$.
Since $c_{\alpha \beta} + d_{\alpha \beta} = 2 \varepsilon$ this net
converges to zero in  the norm of $\overline{\mathcal{B}}_{\infty}$.
Thus by assumption  $4$ in the definition of $*$-admissible sequence
of cones, $\|c_{\alpha \beta}\|_{\overline{\mathcal{B}}_{\infty}}\to
0$. This implies that $c_\alpha$ is a Cauchy net  in
$\overline{\mathcal{B}}_{\infty}$. Let $c=\lim c_\alpha$. Clearly,
$c\in \overline{C}$. Since $\tau$ is continuous $\| \tau(c_\alpha) -
\tau(c) \|_{\overline{\mathcal{B}}_{\infty}} \to 0$. Hence the
closure $\overline{\tau(C)}$ is contained in $\tau(\overline{C})$.
By continuity of $\tau$ we have $\tau(\overline{C})\subseteq
\overline{\tau(C)}$. Hence $\tau(\overline{C})= \overline{\tau(C)}$,
$\tau(\overline{C})$ is closed.

 Let now
$\mathcal{B}$ satisfy condition $(2)$ of the Theorem. Then for every
$x\in \overline{\mathcal{B}}_{\infty}$ we have $\| x^\sharp x \| \ge
\alpha \| x \| \| x^\sharp \|$.
 By~\cite[theorem 34.3]{DoranBelfi}
 $\overline{\mathcal{B}}_{\infty}$ admits an equivalent $C^*$-norm
 $\abs{\cdot}$. Since  $\tau$ is a faithful $*$-representation of the
 $C^*$-algebra  $(\overline{\mathcal{B}}_{\infty}, \abs{\cdot})$
 it is isometric.  Therefore $\tau(\overline{\mathcal{B}}_{\infty})$
 is closed.

Let us show that
$(\tau|_\mathcal{B})^{-1}:\tau(\mathcal{B})\rightarrow \mathcal{B}$
is completely bounded. The image
$\mathcal{A}=\tau(\overline{\mathcal{B}}_{\infty})$ is a
$C^*$-algebra in $B(\widetilde{H})$ isomorphic to
$\overline{\mathcal{B}}_{\infty}$. By Johnson's theorem
(see~\cite{Jo}), two Banach algebra norms on a semi-simple algebra
are equivalent, hence, $\tau^{-1}:\mathcal{A}\to
\overline{\mathcal{B}}_{\infty}$ is bounded homomorphism, say
$\|\tau^{-1}\| = R$. Let us show that
$\|(\tau|_\mathcal{B})^{-1}\|_{cb} = R$. Since
$$\tau|_{\mathcal{B}_{2^n}} = U_n (\tau|_{\mathcal{B}} \otimes id_{M_{2^n}} )U_n^*,
$$ for some unitary   $U_n: K\otimes \mathbb{C}^{2^n}\to
\widetilde{H}$  we have for any $B= [b_{ij}]\in
M_{2^n}(\mathcal{B})$
\begin{eqnarray*} \| \sum b_{ij}\otimes E_{ij} \| &\le& R \|\tau(\sum b_{ij}\otimes E_{ij})\|  \\  &=& R \| U_n( \sum
\tau(b_{ij}) \otimes E_{ij}  )U_n^* \|\\&=& R \| \sum \tau(b_{ij})
\otimes E_{ij} \|. \end{eqnarray*}This is equivalent to $$ \| \sum
\tau^{-1} (b_{ij})\otimes e_{ij} \| \le R \| \sum b_{ij}\otimes
E_{ij} \|,$$ hence $\| (\tau^{-1})^{2^n} (B) \|\le R \| B \|.$ This
proves that $\|(\tau|_\mathcal{B})^{-1}\|_{cb} = R$.

 The converse
statement evidently holds with $*$-admissible sequence of cones
given
 by $(\tau^{(n)})^{-1}(M_n(\mathcal{A})^+)$.$\Box$

Conditions (1) and (2) were used to prove that the image of
isomorphism $\tau$ is closed. The natural question one can ask is
wether there exists an operator algebra $\mathcal{B}$ and
isomorphism $\rho:\mathcal{B}\to B(H)$ with  non-closed self-adjoint
image. The following example gives the affirmative answer.
\begin{example}
Consider the algebra $\mathcal{B} = C^1([0,1])$ as an operator
algebra in $C\sp*$-algebra $\bigoplus\limits_{q\in \mathbb{Q}}
M_2(C([0,1]))$ via inclusion
$$f(\cdot)\mapsto \oplus_{q\in \mathbb{Q}}\left(
                    \begin{array}{cc}
                      f(q) & f'(q) \\
                      0 & f(q) \\
                    \end{array}
                  \right).
$$ The induced norm $$\norm{f} = \sup\limits_{q\in \mathbb{Q}} \left[ \frac{1}{2}(
2\abs{f(q)}^2+ \abs{f'(q)}^2 + \abs{f'(q)}\sqrt{4
\abs{f(q)}^2+\abs{f'(q)}^2}) \right]^{\frac{1}{2}}$$ satisfies the
inequality $ \norm{f}\ge
\frac{1}{\sqrt{2}}\max\{\norm{f}_\infty,\norm{f'}_\infty\}\ge
\frac{1}{2\sqrt{2}}\norm{f}_1$ where $\norm{f}_1 =
\norm{f}_\infty+\norm{f'}_\infty$ is the standard Banach norm on
$C^1([0,1])$. Thus $\mathcal{B}$ is a closed operator algebra with
isometric involution $f^\sharp (x) = \overline{f(x)}$, ($x\in
[0,1]$). The identity map $C^1([0,1]) \to C([0,1])$, $f\mapsto f$ is
a $*$-isomorphism of $\mathcal{B}$ into $C\sp*$-algebra with
non-closed self-adjoint image.
\end{example}

\section{Operator Algebra associated with Kadison's similarity problem. }

In 1955 R. Kadison raised the following problem. Is any bounded
homomorphism $\pi$ of a $C^*$-algebra $\mathcal{A}$ into $B(H)$
similar to a $*$-representation? The similarity above means that
there exists invertible operator $S\in B(H)$ such that $x\to
S^{-1}\pi(x) S$ is a $*$-representation of $\mathcal{A}$.

The following criterion due to Haagerup (see~\cite{Haagerup}) is
widely used in reformulations of Kadison's problem:  non-degenerate
homomorphism $\pi$ is similar to a $*$-representation iff $\pi$ is
completely bounded. Moreover the similarity $S$ can be chosen in
such a way that $\|S^{-1}\| \|S\| = \|\pi\|_{cb}$.

The affirmative answer to the Kadison's problem is obtained  in many
important cases. In particular, for nuclear $\mathcal{A}$,
 $\pi$ is automatically completely bounded with $\|\pi\|_{cb}\le \|\pi\|^2$ (see~\cite{Bunce}).

About recent state of the problem we refer the reader to
\cite{Pisier, Paulsen}.

We can associate an operator algebra $\pi(B)$ to every bounded
injective  homomorphism $\pi$ of a $C^*$-algebra  $\mathcal{A}$. The
fact that $\pi(B)$ is closed can be seen by restricting $\pi$ to a
nuclear $C^*$-algebra $C^*(x^*x)$. This restriction is similar to
$*$-homomorphism for every $x\in \mathcal{A}$ which gives the
estimate $\| x \|\le \|\pi\|^3 \|\pi(x)\|$ (for details
 see~\cite[p. 4]{pitts}). Denote $C_n= \pi^{(n)}(
M_n(\mathcal{A})^+)$.

Let $J$ be an involution in $B(H)$, i.e. self-adjoint operator such
that  $J^2 = I$. Clearly, $J$ is also a unitary operator.  A
representation $\pi: \mathcal{A} \to B(H)$ of a $*$-algebra
$\mathcal{A}$ is called $J$-symmetric if $\pi(a^*) = J \pi(a)^* J$.
Such representations are natural analogs of $*$-representations for
Krein space with indefinite metric $[x,y] = \langle  J x,y \rangle
$.

We will need the following observation due to V.
Shulman~\cite{Shulman} (see also ~\cite[lemma 9.3,
p.131]{KissinShulman}). If $\pi$ is an arbitrary representation of
$\mathcal{A}$ in $B(H)$ then the representation $\rho: \mathcal{A}
\to B(H\oplus H)$, $a \mapsto \pi(a) \oplus \pi(a^*)^*$ is
$J$-symmetric with $J(x\oplus y) = y \oplus x$ and representation
$\pi$ is a restriction $\rho|_{K\oplus \{0\}}$. Moreover, if $\rho$
is similar to $*$-representation then so is $\pi$. Clearly the
converse is also true, thus $\pi$ and $\rho$ are simultaneously
similar to $*$-representations or not. In sequel for an operator
algebra $\mathcal{D} \in B(H)$ we denote by
$\overline{\underrightarrow{\lim} M_{2^n}(\mathcal{D})}$ the closure
of the algebraic direct limit of  of $M_{2^n}(\mathcal{D})$ in the
$C\sp*$-algebra direct limit of inductive system  $M_{2^n}(B(H))$
with standard inclusions $x \to \left(
                    \begin{array}{cc}
                      x & 0 \\
                      0 & x \\
                    \end{array}
                  \right)$.

\begin{theorem}\label{kadison}
Let $\pi:\mathcal{A}\to B(H)$ be a  bounded unital $J$-symmmetric
injective homomorphism of a $C^*$-algebra $\mathcal{A}$ and let
$\mathcal{B}=\pi(\mathcal{A})$. Then $\pi^{-1}$ is a completely
bounded homomorphism. Its extension $\widetilde{\pi^{-1}}$ to the
homomorphism between the inductive limits
$\overline{\mathcal{B}}_{\infty} = \overline{\underrightarrow{\lim}
M_{2^n}(\mathcal{B})}$ and $\overline{\mathcal{A}}_{\infty}  =
\overline{ \underrightarrow{\lim} M_{2^n}(\mathcal{A})}$ is
injective.
\end{theorem}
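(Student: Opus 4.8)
The plan is to exploit the doubling trick already recorded in the text: since $\pi$ is $J$-symmetric, $\mathcal{B}=\pi(\mathcal{A})$ carries the natural involution $\pi(a)^{\sharp} = J\pi(a)^*J = \pi(a^*)$, which is isometric because $J$ is unitary; thus $\mathcal{B}$ is a closed operator algebra with an isometric involution, and the cones $C_n = \pi^{(n)}(M_n(\mathcal{A})^+)$ are the positive cones transported from $\mathcal{A}$. First I would verify that $\{C_n\}$ is a $*$-admissible sequence of cones in the sense of Definition~\ref{consdef}: conditions (1), (2), (3) are immediate images under the algebra isomorphism $\pi^{(n)}$ of the corresponding facts for the $C^*$-algebra $M_n(\mathcal{A})$ (self-adjoint decomposition, $C^+\cap(-C^+)=0$, the $\{$real$\}\oplus i\{$real$\}$ splitting, and $b^*C^+b\subseteq C^+$), using that $\pi^{(n)}$ is an isomorphism onto its image and carries adjoints to $\sharp$; condition (4), the Archimedean-type bound with a uniform $r$, follows because $\|\pi^{(n)}\|\le\|\pi\|_{cb}$ gives a uniform comparison $\|c\|_{M_n(\mathcal{B})} \le \|\pi\|_{cb}\,\|\pi^{(n)\,-1}\|_{cb}\,\|c\|$... more directly, $r\,\|a\|e_n + a \in C_n$ translates to $r\,\|a\|\,I + \pi^{(n)\,-1}(a) \ge 0$ in $M_n(\mathcal{A})$ and the norm distortion between $\|\cdot\|_n$ on $\mathcal{B}$ and on $\mathcal{A}$ is bounded by $\|\pi\|_{cb}$; condition (5), the uniform $\|a\|\le K\|a+ib\|$ for $a,b\in C_n-C_n$, is exactly the statement that taking the $\sharp$-real part is uniformly bounded, which again reduces via $\pi^{(n)\,-1}$ to the $C^*$-fact that real and imaginary parts in $M_n(\mathcal{A})$ have norm at most that of the element, amplified by $\|\pi\|_{cb}$.

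Next I would check that condition (1) of Theorem~\ref{main} holds for this sequence, so as to get the two-sided complete boundedness. For $c,d\in C_n$ we have $\pi^{(n)\,-1}(c), \pi^{(n)\,-1}(d)\ge 0$ in $M_n(\mathcal{A})$, hence $\|\pi^{(n)\,-1}(c)+\pi^{(n)\,-1}(d)\| \ge \|\pi^{(n)\,-1}(c)\|$, and transporting back with the uniform bounds $\|\pi^{(n)}\|\le\|\pi\|_{cb}$, $\|\pi^{(n)\,-1}\|\le\|\pi^{-1}\|_{cb}$ (the latter finite by the estimate $\|x\|\le\|\pi\|^3\|\pi(x)\|$ recalled in the text, which is completely bounded by the standard amplification argument) yields $\|c+d\|\ge r\|c\|$ with $r = 1/(\|\pi\|_{cb}\|\pi^{-1}\|_{cb})$ uniform in $n$. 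Now Theorem~\ref{main} applies verbatim: there is a completely bounded isomorphism $\tau$ from $\mathcal{B}$ onto a $C^*$-algebra $\mathcal{A}'$ with completely bounded inverse, and its extension to $\overline{\mathcal{B}}_\infty$ is the injective $*$-representation constructed in the proof of Theorem~\ref{main}. It remains to identify this $\tau$ with $\pi^{-1}$ (up to a $*$-isomorphism of the targets). But the involution $\sharp$ on $\overline{\mathcal{B}}_\infty$ built from the cone $C$ coincides, on $\mathcal{B}$, with $\pi(a)\mapsto\pi(a^*)$ by Lemma~\ref{involution} together with uniqueness of the $\sharp$-real/imaginary decomposition; hence $\pi^{-1}:\mathcal{B}\to\mathcal{A}$ is a $\sharp$-to-$*$ homomorphism, it extends to $\widetilde{\pi^{-1}}:\overline{\mathcal{B}}_\infty\to\overline{\mathcal{A}}_\infty$ between the two inductive-limit closures (the extension exists and is bounded because $\pi^{-1}$ is completely bounded, so $(\pi^{-1})^{(2^n)}$ is uniformly bounded on $M_{2^n}(\mathcal{B})$), and by Theorem~\ref{onecone}.\ref{onecone3} applied to $(\overline{\mathcal{B}}_\infty,\overline{C},\sharp)$ — whose hypotheses were verified in Lemmas~\ref{invandcone}--\ref{ker} exactly as in the proof of Theorem~\ref{main} — the kernel of any $\sharp$-$*$-representation is $\{x: x^\sharp x\in\overline{C}\cap(-\overline{C})\}=\{0\}$ since $\overline{C}\cap(-\overline{C})=\{0\}$. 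The injectivity of $\widetilde{\pi^{-1}}$ follows.

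Concretely, the cleanest route for the last assertion is: apply Lemma~\ref{ker} and Remark~\ref{rem1} as in the body of the proof of Theorem~\ref{main} to conclude $\overline{C}\cap(-\overline{C})=\{0\}$ in $\overline{\mathcal{B}}_\infty$, where $C=\bigcup_n \xi_n(C_{2^n})$ with $C_{2^n}=(\pi^{(2^n)})^{-1}$... rather, $C_{2^n}=\pi^{(2^n)}(M_{2^n}(\mathcal{A})^+)$; then $\overline{\mathcal{B}}_\infty$ with involution $\sharp$ satisfies all hypotheses of Theorem~\ref{onecone}.\ref{onecone3}, so $\widetilde{\pi^{-1}}$, being (unitarily equivalent to a representation that is) a $\sharp$-$*$-homomorphism onto a subalgebra of bounded operators — indeed onto the dense subalgebra $\underrightarrow{\lim}M_{2^n}(\mathcal{A})$ of $\overline{\mathcal{A}}_\infty$ — has trivial kernel.

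\medskip\noindent\textbf{Main obstacle.} The technical heart is the uniform-in-$n$ control needed for axioms (4) and (5) and for condition (1) of Theorem~\ref{main}: one must push through that $\pi$ being completely bounded with completely bounded inverse gives constants that do not deteriorate as the matrix size grows, using the finiteness of $\|\pi^{-1}\|_{cb}$, which in turn rests on the non-trivial estimate $\|x\|\le\|\pi\|^3\|\pi(x)\|$ (restriction to the nuclear $C^*$-subalgebra $C^*(x^*x)$) promoted to all amplifications. Once these uniform bounds are in hand, the identification of the abstractly-constructed $\tau$ with $\pi^{-1}$ and the injectivity of $\widetilde{\pi^{-1}}$ are essentially bookkeeping on top of Theorem~\ref{onecone} and Lemmas~\ref{involution}--\ref{ker}.
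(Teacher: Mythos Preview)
Your plan has a genuine gap: you repeatedly invoke the finiteness of $\|\pi\|_{cb}$, but this is \emph{not} a hypothesis---only $\|\pi\|<\infty$ is assumed---and whether $\|\pi\|_{cb}<\infty$ is precisely Kadison's problem for $\pi$. This enters your verification of axiom~(4) (``norm distortion\ldots bounded by $\|\pi\|_{cb}$''), of axiom~(5) (``amplified by $\|\pi\|_{cb}$''), and most fatally of condition~(1) of Theorem~\ref{main}, where your constant $r=1/(\|\pi\|_{cb}\,\|\pi^{-1}\|_{cb})$ is meaningful only if $\|\pi\|_{cb}<\infty$. The paper makes this explicit in the Remark following the theorem: condition~(1) of Theorem~\ref{main} for these particular cones is \emph{equivalent} to $\pi$ being completely bounded. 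So your verification of condition~(1) is circular, and the route through the ``two-sided'' part of Theorem~\ref{main} cannot work without assuming the open problem.

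The paper avoids all of this. For axiom~(4) it argues spectrally: $\pi^{(n)}$ is an algebra isomorphism, hence preserves spectra; if $B\in C_n-C_n$ and $r=\|B\|$ then $\spr(D)=\spr(B)\le r$ for $D=(\pi^{(n)})^{-1}(B)\in M_n(\mathcal{A})_{sa}$, so $re_n+D\ge 0$ and $re_n+B\in C_n$. For axiom~(5) it uses $J$-symmetry directly: $\|\pi^{(n)}(a)\|=\|(J\otimes E_n)\pi^{(n)}(a)^*(J\otimes E_n)\|=\|\pi^{(n)}(a^*)\|$, and then the triangle inequality gives $K=1$. Neither step requires any bound on $\|\pi^{(n)}\|$. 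Crucially, the paper \emph{never} verifies condition~(1) of Theorem~\ref{main}. It uses only the first part of that theorem to obtain an injective bounded $\tau:\overline{\mathcal{B}}_\infty\to B(\widetilde{H})$ with $\tau|_{\mathcal{B}}$ completely bounded and $\tau(C_1)=\tau(\mathcal{B})^+$; then $\rho=\tau\circ\pi:\mathcal{A}\to B(\widetilde{H})$ is a positive homomorphism, hence a $*$-homomorphism, and injective, so $\rho^{-1}$ is a (completely isometric) $*$-isomorphism of $C^*$-algebras. From $\pi^{-1}=\rho^{-1}\circ\tau$ one reads off both that $\pi^{-1}$ is completely bounded and that its extension to $\overline{\mathcal{B}}_\infty$ is injective.
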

\begin{proof}
  Let us show that $\{C_n\}_{n\ge 1}$ is a
$*$-admissible sequence of cones.  It is routine to verify that
conditions (1)-(3) in the definition of $*$-admissible cones are
satisfied for $\{C_n\}$. To see that condition $(4)$  also holds
take $B\in C_{n} - C_{n}$ and denote  $r = \norm{B}$. Let $D \in
M_{n}(A)_{sa}$ be such that
 $B = \pi^{(n)}(D)$. Since $\pi^{(n)}: M_n(\mathcal{A}) \to M_n(\mathcal{B})$
 is algebraic isomorphism it preserves spectra  $\sigma_{ M_n(\mathcal{A})}(x) =
 \sigma_{M_n(\mathcal{B})}(\pi^{(n)}(x))$. Since  the  spectral radius
$\spr(B)\le r$ we have  $\spr(D)\le r$. Hence  $r e_{n} + D \in
M_{n} (A)^+$ because $D$ is self-adjoint. Applying $\pi^{(n)}$ we
get $r e_{n} + B\in C_{n}$ which proves  condition (4).

Since $\pi$ is $J$-symmetric $$\|\pi^{(n)}(a)\| =\|(J\otimes E_n)
\pi^{(n)}(a)^* (J\otimes E_n) \|= \|\pi^{(n)}(a^*)\|$$ for every
$a\in M_n(\mathcal{A})$, and
 \begin{eqnarray*} \| \pi^{(n)}(h_1) \| &\le& 1/2 ( \|
\pi^{(n)}(h_1) + i \pi^{(n)}(h_2) \| + \| \pi^{(n)}(h_1) - i
\pi^{(n)}(h_2) \|) \\ &=&  \| \pi^{(n)}(h_1) + i \pi^{(n)}(h_2)
\|
\end{eqnarray*} for all $h_1, h_2 \in C_n-C_n$.
Thus condition (5) is satisfied and $\{C_n\}$ is $*$-admissible. By
Theorem~\ref{main}, there is an injective bounded  homomorphism
$\tau: \overline{\mathcal{B}}_\infty \to B(\widetilde{H})$ such that
its restriction to $\mathcal{B}$ is completely bounded,
$\tau(b^\sharp)=\tau(b)^*$ and  $\tau_n(C_n)=
\tau_n(M_n(\mathcal{B}))^+$.

Denote $\rho = \tau \circ \pi :\mathcal{A} \to B(\widetilde{H})$.
Since $\rho$ is a positive  homomorphism, it is a
$*$-representation. Moreover, $\ker \rho = \{0\}$ because both $\pi$
and $\tau$ are injective. Therefore $\rho^{-1}$ is $*$-isomorphism.
Since $\tau: \mathcal{B}\to B(\widetilde{H})$ extends to an
injective homomorphism of inductive limit
$\overline{\mathcal{B}}_\infty$ and $\rho^{-1}$ is completely
isometric, we have that  $\pi^{-1} = \rho^{-1}\circ \tau$ extends to
injective homomorphism of $\overline{\mathcal{B}}_\infty$. It is
also clear that $\pi^{-1}$ is completely bounded as a superposition
of two completely bounded maps.
\end{proof}

\begin{remark}
The first statement of Theorem~\ref{kadison} can be deduced also
 from~\cite[Theorem 2.6]{pitts}.
\end{remark}

\begin{remark}
Note that condition (1) and (2) in Theorem~\ref{main} for cones
$C_n$ from the proof of  Theorem~\ref{kadison} is obviously
equivalent to $\pi$ being completely bounded.
\end{remark}

\begin{center}
{\bf Acknowledgments.}
\end{center}

The authors wish to express their thanks to Victor Shulman for
helpful comments and providing the reference \cite{Shulman}.

The work was written when the second author was visiting Chalmers
University of Technology in G\"oteborg, Sweden. The second author
was supported by the Swedish Institute.


\begin{thebibliography}{99}
\bibitem{Bunce} \textsc{J. Bunce}, \textit{The similarity problem
for representations of $C^*$-algebras}, Proc. Amer.  Math. Soc.  81
(1981), p. 409-414.
\bibitem{ChoiEffros}M.D. Choi, E.G. Effros, Injectivity and operator spaces.  {\it J. Functional Analysis}  24  (1977), no. 2, 156--209.

\bibitem{DoranBelfi} R.S. Doran, V.A. Belfi,
Characterizations of $C\sp *$-algebras. The Gelfand-Na\u\i mark
theorems. Monographs and Textbooks in Pure and Applied Mathematics,
101. Marcel Dekker, Inc., New York, 1986. xi+426 pp.

\bibitem{Haagerup} \textsc{U. Haagerup}, \textit{Solution of the similarity problem
for cyclic representations of $C^*$-algebras}, Annals of Math. 118
(1983), p. 215-240
\bibitem{Paulsen}\textsc{D. Hadwin, V. Paulsen}, \textit{Two reformulations of  Kadison's similarity problem},
J. Oper. Theory, Vol. 55, No. 1, (2006), 3-16.

\bibitem{Jo} \textsc{B. Johnson}\textit{ The uniqueness of the (complete) norm topology}, Bull. Amer. Math. Soc. {\bf 73} (1967), 537-539

\bibitem{KissinShulman} \textsc{E. Kissin, V. Shulman},  \textit{Representations on
Krein spaces and derivations of  $C\sp*$-algebras}, Pitman
Monographs and Surveys in Pure and Applied Mathematics 89, 1997

\bibitem{merdy}\textsc{C. Le Merdy}, \textit{Self adjointness criteria for
operator algebras}, Arch. Math. 74 (2000), p. 212- 220.

\bibitem{Pisier} \textsc{G. Pisier}, \textit{Similarity Problems and Completely Bounded
Maps}, Springer-Verlag  Lecture Notes in Math 1618, 1996

\bibitem{pitts} \textsc{D. Pitts}, \textit{Norming algebras and automatic complete
boundedness of isomorphism of operator algebras}, arXiv:
math.OA/0609604, 2006

\bibitem{Popovych}\textsc{S. Popovych}, \textit{ On $O^*$-representability and
$C^*$-representability of $*$-algebras}, \textit{Chalmers \&
G\"oteborg  University math. preprint} 2006:35.

\bibitem{powers} \textsc{R. Powers},  \textit{Selfadjoint algebras of unbounded operators II},
\textrm{Trans. Amer. Math. Soc.}  187  (1974), 261--293.

\bibitem{Shulman} \textsc{V.S. Shulman}, \textit{On representations of
$C\sp*$-algebras on indefinite metric spaces}, Mat. Zametki, {\bf
22}(1977), 583-592 = Math Notes {\bf 22}(1977)
\end{thebibliography}
\end{document}